\newtheorem{theorem}{Theorem}
\newtheorem{prop}[theorem]{Proposition}
\newtheorem{lem}[theorem]{Lemma}
\newdefinition{rem}[theorem]{Remark}
\newdefinition{mydef}[theorem]{Definition}
\newdefinition{example}[theorem]{Example}
\newdefinition{examples}[theorem]{Examples}
\def\C{\mathbb{C}}
\def\R{\mathbb{R}}
\def\T{\mathbb{T}}
\def\x{\mathbf{x}}
\def\y{\mathbf{y}}
\def\z{\mathbf{z}}
\def\w{\mathbf{w}}
\def\e{\mathbf{e}}
\begin{document}
% Enter full title and short title for running headers
\title{Generalized contact geometry and T-duality}

% Enter the publication year and the ID number of the paper

% Author names
\author[VCU]{Marco Aldi}
\ead{maldi2@vcu.edu}
\address[VCU]{Department of Mathematics and Applied Mathematics, Virginia Commonwealth University, 1015 Floyd Avenue, Richmond VA 23284, United States}

\author[UNM]{Daniele Grandini \corref{cor2}\fnref{fn2,fn3}}
\ead{grandini.math@gmail.com}
\address[UNM]{Department of Mathematics and Statistics, University of New Mexico, Albuquerque NM 87131, United States}
\cortext[cor2]{Corresponding author}
\fntext[fn2]{Present address: Department of Mathematics and Applied Mathematics, Virginia Commonwealth University, 1015 Floyd Avenue, Richmond VA 23284, United States}
\fntext[fn3]{Phone: +1-(804)827-5277  }

\begin{abstract}
We study generalized almost contact structures on odd-dimensional manifolds. We introduce a notion of integrability and show that the class of these structures is closed under symmetries of the Courant-Dorfman bracket, including T-duality. We define a notion of geometric type for generalized almost contact structures, and study its behavior under T-duality.
\end{abstract}
\begin{keyword}
generalized geometry \sep contact geometry \sep T-duality

\MSC 53D18\sep 53D15\sep 53D37
\end{keyword}
\maketitle
\section{Introduction}
Generalized (almost) complex structures (see \citep{Gualtieri} for a systematic exposition and references) on even dimensional real manifolds  have received considerable attention from both physicists and mathematicians. While it is natural to look for odd-dimensional analogues of generalized complex geometry, the subject is still in its infancy. Generalized almost contact structures on a manifold $M$ of odd real dimension were first defined  in \citep{Poon-Wade} as a particular case of  generalized $F$-structures, introduced in \cite{Vaisman}.  In \cite{Poon-Wade}, generalized almost contact structures are equivalence classes of triples $(\Phi,F,\eta)$ (called here {\it Poon-Wade triples}) consisting of an endomorphism $\Phi$ of the generalized tangent bundle $\T M=TM\oplus T^*M$, a global vector field $F$ (playing the role of the Reeb vector field in ordinary contact geometry) and a global $1$-form $\eta$, subject to suitable axioms.

In the present paper, we define generalized almost contact structures as pairs $(E,L)$ of subbundles $E\subseteq \T M$, $L\subseteq \T M\otimes \C$ satisfying certain nondegeneracy and isotropy conditions. In particular, any Poon-Wade triple $(\Phi,F,\eta)$ yields a generalized almost contact structure $(E,L)$ according to our definition, where $E={\rm span}(F,\eta)$ and $L$ is the $\sqrt{-1}$-eigenbundle of $\Phi$.
We also introduce a local invariant of a generalized almost contact structure $(E,L)$, called the {\it geometric type}, which is a natural analogue of the type of a generalized almost complex structure, and we prove that the geometric type characterizes (up to isomorphism) the pairs $(E,L)$ coming from Poon-Wade triples. Moreover, almost contact structures and almost cosymplectic structures are characterized by having extremal geometric types.
%Cone geometry and Sekiya quadruples
As it turns out, every generalized almost structure $(E,L)$ on $M$ can be lifted (not uniquely) to a generalized almost complex structure on the cone $C(M):=M\times (0,\infty)$. This construction singles out a class of generalized contact structures, namely those that can be lifted to a generalized complex structure. We call such generalized almost contact structures \emph{normal}. This notion is modeled after the classical case of normal almost contact structures.
One of our main results is an intrinsic characterization of normal generalized almost structures $(E,L)$ in terms of the geometry of the subbundles $E$ and $L$. Inspired by \cite{Poon-Wade}, we give a notion of \emph{integrability} and \emph{strong integrability} for a generalized almost contact structure $(E,L)$, in terms of certain Dirac structures constructed out $E$ and $L$. We prove that $(E,L)$ if and only if it strongly integrable and $E$ admits an isotropic frame $\e_1,\e_2$ such that $[\e_1,\e_2]=-[\e_2,\e_1]$ is the orthogonal projection onto $E^{\bot}$ of an exact 1-form.
We also complete the result in \cite{Poon-Wade} by providing a full characterization of normal almost contact structures and cosymplectic structures in terms of generalized geometry.
In the second part of the paper we give an alternate description of generalized almost contact structures in the language of spinors. While generalized almost complex structures can locally be encoded by a single pure spinor, we show that a generalized almost contact structure $(E,L)$ can be locally described by a pair of pure spinors $(\rho_1,\rho_2)$ that are intertwined by the Clifford action of the bundle $E$. We call such pairs of pure spinors \emph{mixed pairs}. Our main result in this direction is that normality, integrability and strong integrability of generalized almost contact structures can be fully described in terms of mixed pairs.
The language of mixed pair is particularly convenient in order to describe the behaviour of generalized almost contact structures under T-duality. In particular, we are able to complete the result of \cite{Gualtieri-Cavalcanti} in the odd-dimensional setting providing an explicit relation between the geometric types of T-dual generalized almost contact structures. Throughout the paper we provide several examples, including a new family of generalized almost contact structures on the sphere $S^3$, for which the geometric type is generically not constant.

\section{Preliminaries on Generalized Geometry}
The \emph{generalized tangent bundle} of a real smooth manifold $M$ of dimension $m$ is the vector bundle $\T M:=TM\oplus T^*M$. It comes equipped with
\begin{itemize}
  \item an \emph{inner product}, defined by
  $$\langle X+\alpha, Y+\beta\rangle:=2^{-1}(\alpha(Y)+\beta(X)),\quad X,Y\in \Gamma(TM),\ \alpha, \beta\in \Gamma(T^*M)$$
 (which is $C^{\infty}(M)$-bilinear, symmetric, nondegenerate and with signature $(n,n)$);
  \item an $\R$-bilinear map $[\ ,\ ]: \Gamma(\T M)\times \Gamma(\T M)\rightarrow \Gamma(\T M)$ called the \emph{Dorfman bracket}, given by
  $$[X+\alpha, Y+\beta]:=[X,Y]+{\mathcal L}_X\beta-i_Yd\alpha ,\quad X,Y\in \Gamma(TM),\ \alpha, \beta\in \Gamma(T^*M).$$
\end{itemize}
(Sections of $\T M$ are denoted by $\x, \y$, etc. unless their (co)tangent components need to be specified.) Let $a: \T M\rightarrow TM$ be the obvious projection. The quadruple $(\T M, \langle\ ,\ \rangle, [\ ,\ ],a)$ satisfies the axioms of the \emph{Courant algebroids}, i.e.\ \begin{itemize}
  \item $a(\x)\left(\langle\y,\z\rangle\right)=\langle[\x,\y],\z\rangle+\langle\y,[\x,\z]\rangle$,
  \item $[\x,[\y,\z]]=[[\x,\y],\z]+[\y,[\x,\z]]$,
  \item $[\x,\y]+[\y,\x]=2d\langle\x,\y\rangle$,
\end{itemize}
for all $\x,\y,\z\in \Gamma(\T M)$. If $H$ is a closed three-form, we define the \emph{twisted Dorfman bracket} as
$$[\x,\y]_H=[\x,\y]-i_{a(\x)}i_{a(\y)}H.$$
The quadruple $(\T M, \langle\ ,\ \rangle, [\ ,\ ]_H,a)$ satisfies the axioms of Courant algebroids as well. A \emph{symmetry} of the generalized tangent bundle is a bundle morphism $F: \T M \rightarrow \T M$ such that
$$\langle F\x, F\y\rangle=\langle\x,\y\rangle,\quad [F\x , F\y]=F[\x,\y].$$
The group of all symmetries is the semidirect product
$${\rm Diff}(M)\ltimes \Omega^2_{cl}(M)$$
where a diffeomorphism $f:M\rightarrow M$ acts via its \emph{generalized Jacobian}
$$\T f:\T M\rightarrow \T M, \quad X_p+\alpha_p\longmapsto (T_pf)(X_p)+\left(T_{f(p)}f^{-1}\right)^*(\alpha_p),$$
and a closed 2-form $\omega$ acts via the \emph{gauge shift} (also called $B$-\emph{field} or $S$-\emph{field})
$$e^{\omega}:\T M\rightarrow \T M$$
$$X+\alpha\mapsto X+\alpha+i_X\omega\,.$$
\ \\
A \emph{generalized almost complex structure} on (a necessarily even dimensional) manifold $M$ is a base-fixing bundle morphism $J:\T M\rightarrow \T M$ such that $J^2=-{\rm Id}$ and $J^*=-J$.
A generalized almost complex structure $J$ is called \emph{integrable} (or a \emph{generalized complex structure}) if for all sections $\x$ and $\y$ we have
$$[J\x,J\y]-[\x,\y]-J([J\x,\y]+[\x,J\y])=0\,.$$
Generalized almost complex structures are in one-to-one correspondence with complex subbundles $L\subset\T M\otimes \C$ that are \emph{maximal isotropic} (i.e. the restriction of $\langle\ ,\ \rangle$ to $L$ vanishes and $L$ is of maximal rank with this property) and such that $L\cap \overline{L}=0$. The correspondence is given by
$$J\longleftrightarrow L=\left\{\x\in \T M\otimes \C:J\x=\sqrt{-1}\x\right\}\,.$$
A generalized almost complex structure $J$ is integrable if and only if $L$ is \emph{involutive}, i.e.\  $[\Gamma(L),\Gamma(L)]\subseteq \Gamma(L)$.
\section{Generalized almost contact structures}
\begin{mydef}Let $M$ be an odd dimensional manifold.
A \emph{generalized almost contact structure} on $M$ is a pair $(E,L)$, where
\begin{itemize}
\item $E\simeq M\times \R^2$ is a trivial subbundle of $\T M$ such that the restriction $\langle, \rangle_{|E}$ is nondegenerate and with signature $(1,1)$.
\item $L$ is a maximal isotropic subbundle of $E^{\bot}\otimes\C$ such that $L\cap \overline{L}=0$.
\end{itemize}
\end{mydef}
\begin{mydef}A \emph{generalized almost contact triple} (or just \emph{triple}, for short) is given by the data $(\Phi,\e_1,\e_2)$, where  $\Phi:\T M\rightarrow \T M$ is a base-fixing bundle morphism and sections $\e_1, \e_2\in \Gamma(\T M)$ such that
\begin{itemize}
\item $\langle\e_1, \e_1\rangle=\langle \e_2,\e_2\rangle=0$, $\langle\e_1, \e_2\rangle=1/2$;
  \item $\Phi^*=-\Phi$;
  \item $\Phi(\e_1)=0=\Phi(\e_2)$;
  \item $\Phi^2(\x)=-\x+2\langle\x,\e_1\rangle\e_2+2\langle\x,\e_2\rangle\e_1$.
\end{itemize}
Moreover, we say that a generalized almost contact triple is a {\it Poon-Wade triple} if  $(\e_1,\e_2)\in\Gamma(TM)\times \Gamma(T^*M)$.
Finally, two triples $(\Phi,\e_1,\e_2)$, $(\Phi',\e_1',\e_2')$ are \emph{homothetic} if $\Phi=\Phi'$ and ${\rm span}(\e_1,\e_2)={\rm span}(\e_1',\e_2')$.
\end{mydef}
\begin{mydef} Given a triple $(\Phi,\e_1,\e_2)$, let $E:={\rm span}(\e_1,\e_2)$ and let $L$ be the $\sqrt{-1}$-eigenbundle of $\Phi$. Then $(E,L)$ is a generalized almost contact structure.
In this case we say that the triple $(\Phi,\e_1,\e_2)$ represents the generalized almost contact structure $(E,L)$.
\end{mydef}
\begin{prop}\label{triples}
Any $(E,L)$ is represented by some triple, and two triples represent the same generalized almost contact structure if and only if they are homothetic. Moreover, the set of triples that represent a generalized almost contact structure $(E,L)$ is acted freely and transitively by the group ${\rm O}(E)\simeq C^{\infty}(M, {\rm O}(1,1))$.
\end{prop}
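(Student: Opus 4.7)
The plan is to extract from the data $(E,L)$ a canonical isotropic decomposition $\T M \otimes \C = (E\otimes\C) \oplus L \oplus \overline{L}$ and read off the triple directly. To construct a representing triple, first choose a global isotropic frame $\e_1, \e_2 \in \Gamma(E)$ with $\langle \e_1, \e_1\rangle = \langle \e_2, \e_2\rangle = 0$ and $\langle \e_1, \e_2\rangle = 1/2$; its existence uses the triviality of $E$ together with the signature $(1,1)$ hypothesis, which force the two null line subbundles of $E$ to be trivial. Since $L$ is maximal isotropic in $E^{\bot}\otimes \C$ with $L \cap \overline{L} = 0$, we obtain $E^{\bot}\otimes \C = L \oplus \overline{L}$. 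Define $\Phi$ to vanish on $E$ and to act as $\sqrt{-1}\cdot{\rm Id}$ on $L$ and $-\sqrt{-1}\cdot{\rm Id}$ on $\overline{L}$, extending complex-linearly. Because these rules are conjugation-invariant, $\Phi$ descends to a real endomorphism of $\T M$. The axiom $\Phi^*=-\Phi$ is verified summand by summand: it is trivial on $E$ and on the cross-terms (since $\Phi(E^{\bot})\subseteq E^{\bot}$), while on $E^{\bot}$ it reduces to the isotropy of $L$ and $\overline{L}$ together with the duality of the pairing between them. Evaluating $\Phi^2(\x)=-\x+2\langle\x,\e_1\rangle\e_2+2\langle\x,\e_2\rangle\e_1$ on the two summands of $\T M=E\oplus E^{\bot}$, both sides agree: the right-hand side collapses to $0$ on $E$ (matching $\Phi^2|_E=0$) and to $-\x$ on $E^{\bot}$ (matching $\Phi^2|_{E^{\bot}}=-{\rm Id}$).

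For the uniqueness statement, suppose $(\Phi,\e_1,\e_2)$ and $(\Phi',\e_1',\e_2')$ both represent the same $(E,L)$. By definition ${\rm span}(\e_1,\e_2)=E={\rm span}(\e_1',\e_2')$. Moreover, $\Phi$ and $\Phi'$ both vanish on $E$ by the triple axioms and both have $+\sqrt{-1}$-eigenbundle equal to $L$; since $\T M\otimes \C=(E\otimes \C)\oplus L\oplus \overline{L}$, this forces $\Phi=\Phi'$ on all of $\T M$, so the two triples are homothetic. The converse—that homothety implies representing the same $(E,L)$—is immediate from the definitions.

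For the action of $O(E)$, the uniqueness just established shows that a triple representing $(E,L)$ is the same datum as an ordered isotropic frame $(\e_1,\e_2)$ of $E$ with $\langle\e_1,\e_2\rangle=1/2$. Any two such frames differ by a smooth bundle automorphism of $E$ preserving $\langle,\rangle_{|E}$, i.e., by an element of $O(E)$; fixing one reference frame identifies $O(E)$ with $C^{\infty}(M,O(1,1))$ via the triviality of $E$. The action is free (the transforming element is pointwise determined by its values on the frame) and transitive (the change-of-basis matrix between any two frames is a smooth $O(1,1)$-valued function).

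I expect the only step requiring genuine care is the global existence of an isotropic frame of $E$: although every signature $(1,1)$ inner product on $\R^2$ has two pointwise null lines, assembling them into globally trivial line subbundles depends on using both the triviality of $E$ and the constancy of the signature. Once that is in place, the remainder of the proposition is a formal manipulation of the decomposition $\T M\otimes \C=(E\otimes \C)\oplus L\oplus \overline{L}$.
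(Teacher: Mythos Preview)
Your argument is correct and follows exactly the paper's approach: pick a global isotropic frame of $E$, define $\Phi$ via the eigenbundle decomposition $(E\otimes\C)\oplus L\oplus\overline L$, and note that the residual freedom is an $O(E)$-torsor. You give considerably more detail than the paper's terse proof (verifying the triple axioms and the uniqueness of $\Phi$ explicitly), and you correctly flag the existence of the global isotropic frame as the only step requiring genuine care---a point the paper also asserts without further comment.
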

\begin{proof}
Given $(E,L)$, there exists a global isotropic frame $\e_1,\e_2$ of $E$ such that $\langle\e_1,\e_2\rangle=1/2$, and any two choices of such frame are related by an element of ${\rm O}(E)$. Moreover, there exists a unique $\Phi:\T M\rightarrow \T M$ with ${\rm Ker}(\Phi)=E$ and ${\rm Ker}(\Phi-\sqrt{-1}{\rm Id})=L$ and any triple $(\Phi,\e_1,\e_2)$ obtained in this way represents $(E,L)$.
\end{proof}

\begin{examples}\label{examplesoftriples}
\begin{itemize}
  \item[(a)] (Almost contact structures) An almost contact structure is a triple $(\phi,\xi,\eta)$ where $\phi:TM\rightarrow TM$, $\xi$ is a vector field and $\eta$ is a one-form such that $\eta(\xi)=1$, $\phi(\xi)=0=\phi^*(\eta)$ and $\phi^2(X)=-X+\eta(X)\xi$. An almost contact structure defines a Poon-Wade triple $(\Phi,\e_1,\e_2)$, where $\e_1:=\xi$, $\e_2:=\eta$,
      $$\Phi:=\left[\begin{array}{cc}\phi&0\\0&-\phi^*\end{array}\right]$$
 and the blocks correspond to the splitting $\T M=TM\oplus T^*M$.

 \item[(b)] (Almost cosymplectic structures) An almost symplectic structure is a pair $(\theta,\eta)$ where $\theta$ is a 2-form and $\eta$ is a 1-form such that
 $\theta^n\wedge\eta\neq 0$ $({\rm dim}\ M=2n+1)$. Then, $\theta$ is nondegenerate on ${\rm Ker}(\eta)$ and there is a unique vector field $\xi$ such that $$\eta(\xi)=1,\quad i_{\xi}\theta=0.$$
 Let $\phi:{\rm Ker}(\eta)\to{\rm Ann}(\xi)$ be the isomorphism defined by $\phi(X)= i_X\theta$ for all $X\in {\rm Ker}(\eta)$.
If $\e_1:=\xi$, $\e_2:=\eta$, and $\Phi$ is defined by $\Phi(\e_1)=\Phi(\e_2)=0$ and
 $$\Phi_{|E^{\bot}}:=\left[\begin{array}{cc}0&-\phi^{-1}\\\phi&0\end{array}\right]$$
 where the blocks correspond to the splitting $E^{\bot}={\rm Ker}(\eta)\oplus {\rm Ann}(\xi)$, then $(\Phi,\e_1,\e_2)$ is a Poon-Wade triple. The particular case $\theta=d\eta$ shows that contact structures are examples of generalized almost contact structures.

 \item[(c)] (Isomorphic and equivalent structures) If $(E,L)$ is a generalized almost contact structure and $F:\T M\rightarrow \T M$ is a base-fixing bundle morphism which is orthogonal (i.e. it preserves the inner product), then $(F(E), F(L))$ is a generalized almost contact structure. Two generalized almost contact structures that are related by orthogonal morphisms are called \emph{isomorphic}. Generalized almost contact structures that are related by symmetries are called \emph{equivalent}.

\item[(d)] (Products \cite{GT}) Let $M=M_1\times M_2$, $\pi_i:M\rightarrow M_i$ be the projections. If $(E, L_1)$ is a generalized almost contact structure on $M_1$ and $L_2$ is a generalized almost complex structure on $M_2$, then $(\pi_1^*E, \pi_1^*L_1\oplus\pi_2^*L_2)$ is a generalized almost contact structure on $M$. The product of two generalized almost contact manifolds is never generalized almost contact, for dimensional reasons. However, it admits generalized almost complex structures.

 \item[(e)] (Deformations) Let $(E,L)$  be a generalized almost contact structure on $M$, and let $\varepsilon: L\rightarrow\overline{L}$ be a skew bundle morphism such that ${\rm Id}_L-\overline{\varepsilon}\varepsilon$ is invertible. If $L_{\varepsilon}$ is the graph of $\varepsilon$, then $(E,L_{\varepsilon})$ is a generalized almost contact structure.
\end{itemize}
\end{examples}

\section{The geometric type}
\begin{mydef}Given a generalized almost contact structure $(E,L)$, its \emph{geometric type} at $x\in M$ is the pair $(p_E(x),t_L(x))$, where
$$p_E(x):={\rm dim}(a(E_x)), \quad t_L(x):={\rm codim}_{\C}(a(L_x)).$$\end{mydef}
It is easy to see that the geometric type is invariant under symmetries.

\begin{prop}
Let ${\rm dim}(M)=2n+1$. Only two cases can occur:
\begin{itemize}
  \item $p_E(x)=1$ and $1\leq t_L(x)\leq n+1$;\\
  \item $p_E(x)=2$ and $1\leq t_L(x)\leq n$.
\end{itemize}
\end{prop}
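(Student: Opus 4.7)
The plan is to bound $p_E(x)$ first, then derive the matching bounds on $t_L(x)$ from the fact that $L\oplus\overline L=E^\perp\otimes\C$ (which follows from $L$ being maximal isotropic in $E^\perp\otimes\C$ together with $L\cap\overline L=0$, and the rank count $\dim_\C L=2n$).

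For $p_E(x)$: the projection $a|_{E_x}\colon E_x\to T_xM$ has rank $0$, $1$, or $2$. Rank $0$ would mean $E_x\subseteq T_x^*M$, but $T^*M$ is isotropic, contradicting the assumption that $\langle\,,\,\rangle|_E$ has signature $(1,1)$. So $p_E(x)\in\{1,2\}$.

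For the upper bound on $t_L(x)$ I would first compute $\dim a(E^\perp_x)$. Since $\ker(a|_{E^\perp_x})=E^\perp_x\cap T_x^*M=\mathrm{Ann}(a(E_x))$ has dimension $2n+1-p_E(x)$, and $E^\perp_x$ has real rank $4n$, we get
\[
\dim_\R a(E^\perp_x)=4n-(2n+1-p_E(x))=2n-1+p_E(x).
\]
Since $L\oplus\overline L=E^\perp\otimes\C$, applying $a$ yields $a(L_x)+\overline{a(L_x)}=a(E^\perp_x)\otimes\C$, which has complex dimension $2n-1+p_E(x)$. Because $a(L_x)$ and $\overline{a(L_x)}$ have the same dimension, say $d$, we have $2d\ge 2n-1+p_E(x)$, hence (as $d$ is an integer)
\[
d\ge \left\lceil\frac{2n-1+p_E(x)}{2}\right\rceil=\begin{cases}n&\text{if }p_E(x)=1,\\ n+1&\text{if }p_E(x)=2,\end{cases}
\]
and $t_L(x)=2n+1-d$ gives the desired upper bounds $n+1$ and $n$.

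For the lower bound $t_L(x)\ge 1$, I would simply observe $\dim_\C a(L_x)\le\dim_\C L_x=2n<2n+1=\dim_\C(T_xM\otimes\C)$. The only step requiring any thought is checking that $a(L_x)+\overline{a(L_x)}$ exhausts $a(E^\perp_x)\otimes\C$, which uses the hypothesis $L\cap\overline L=0$ in an essential way; the rest is rank bookkeeping.
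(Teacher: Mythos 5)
Your proof is correct and follows essentially the same route as the paper: both arguments reduce the bound on $t_L$ to computing $\dim a(E^{\bot})$ and then exploiting $L\oplus\overline{L}=E^{\bot}\otimes\C$ (the paper phrases this as the inclusion--exclusion identity $\dim a(E^{\bot})=2(2n+1-t_L)-\dim_{\C}(a(L)\cap a(\overline{L}))$, which is equivalent to your inequality $2d\ge 2n-1+p_E$). The only cosmetic difference is that you obtain $\dim a(E^{\bot})=2n-1+p_E$ uniformly by rank--nullity applied to $\ker(a|_{E^{\bot}})={\rm Ann}(a(E))$, whereas the paper computes $a(E^{\bot})$ explicitly in each of the two cases.
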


\noindent
\begin{proof}We have
\[
{\rm dim} (a(E^{\bot}))=2(2n+1-t_L)-{\rm dim}_{\C}(a(L)\cap a(\overline{L}))\,.
\]
Since $E$ cannot be a subbundle of $T^*M$, either $p_E=1$ or $p_E=2$. Let $p_E=2$. Then $E$ is generated by $\e_i=X_i+\alpha_i$, where $i=1,2$ and $X_1, X_2$ are linearly independent. Then for each tangent vector $Y$ there is a cotangent vector $\beta$ such that $$\beta(X_1)=-\alpha_1(Y),\quad \beta(X_2)=-\alpha_2(Y)\,.$$
In other words, $a(E^{\bot})=TM$, hence
\[
2t_L=2n+1-{\rm dim}_{\C}(a(L)\cap a(\overline{L}))
\]
and $1\leq t_L\leq n$. If $p_E=1$, then $E$ is generated by $\e_1=X+\alpha\,, \e_2=\beta$ with $X\neq 0\neq \beta$.
It follows that $a(E^{\bot})={\rm Ker}(\beta)$ and
\[
2t_L=2n+2-{\rm dim}_{\C}(a(L)\cap a(\overline{L}))
\]
which implies $1\leq t_L\leq n+1$.\end{proof}

\begin{examples}\label{examplestype}
\begin{itemize}
\item[(a)] If $(E,L)$ is represented by an almost cosymplectic structure, then $p_E=1$, $t_L=1$.

\item[(b)] If $(E,L)$ is represented by an almost contact structure, then $p_E=1$, $t_L=n+1$.

\item[(c)] Consider $S^1$ equipped with the obvious constant generalized almost contact structure of geometric type $(1,1)$. Also, let $C$ be a cubic in the complex projective plane $\C\mathbb{P}^2$. Then there is a generalized complex structure on $\C\mathbb{P}^2$ whose type equals $2$ along the cubic and $0$ elsewhere \citep{Gualtieri}. It follows that the Cartesian product $S^1\times \C\mathbb{P}^2$ is equipped with the product structure of \ref{examplesoftriples}(4), which is a generalized almost contact structure $(E,L)$ with $p_E=1$ and
\[
t_L=\left\{\begin{array}{ll} 3 & \mbox{ on }S^1\times C\,,\\
                                     1 & \mbox{ elsewhere\,. } \end{array}\right.
\]

\item[(d)] The previous example has the following generalization. Let $\pi:M^{2n+1}\rightarrow N^{2n}$ be a principal bundle, let $TM=V\oplus {\rm Hor}$ be a splitting into vertical and horizontal subbundles and let
\[
E=V\oplus {\rm Ann}({\rm Hor})\subset \mathbb{T}M\,.
\]
Given a generalized almost complex structure $L\subset \mathbb{T}N\otimes \C$ define
\[
(\pi^\star L)_p:=\left\{v+\pi^*\lambda\in ({\rm Hor}_p \oplus T^*M) \otimes \C\,|\, \pi_*v+\lambda\in L_{\pi(p)}\right\}\,,
\]
for each $p\in M$.
Then, $(E,\pi^\star L)$ is a generalized almost contact structure on $M$ with $p_E=1$. In particular, $S^5$ (a circle bundle over $\C\mathbb{P}^2$) admits generalized almost contact structures with nonconstant $t_L$.
  \item[(e)] Consider a {\it triple almost contact structure}, i.e.\ a triple of almost contact structures $\{(\phi_a,\xi_a, \eta_a)\}_{a=1,2,3}$ such that
\[
\eta_a(\xi_b)=\delta_{ab},\quad\phi_a(\xi_b)=\epsilon_{abc}\xi_c,\quad \phi^*_a\eta_b=-\epsilon_{abc}\eta_c\, ,
\]
\[
\phi_a\phi_b=-\delta_{ab}{\rm Id}+\xi_a\otimes \eta_b+\epsilon_{abc}\phi_c\,.
\]
If
\[\e_1=2^{-1/2}(\xi_1+\eta_2),\quad \e_2=2^{-1/2}(\xi_2+\eta_1)
\]
and $\Phi_0:\T M\to \T M$ is defined by
\begin{align}
\Phi_0(X)&=\phi_3(X)-\eta_1(X)\xi_2+\eta_2(X)\xi_1+\nonumber\\
&+2^{-1/2}\left(\eta_1(X)\xi_3-\eta_3(X)\xi_1-\eta_2(X)\eta_3+\eta_3(X)\eta_2\right),\nonumber\\
\Phi_0(\alpha)&=-\phi_3^*(\alpha)-\alpha(\xi_1)\eta_2+\alpha(\xi_2)\eta_1+\nonumber\\
              &+2^{-1/2}\left(\alpha(\xi_1)\eta_3-\alpha(\xi_3)\eta_1-\alpha(\xi_2)\xi_3+\alpha(\xi_3)\xi_2\right),\nonumber\end{align}
for any vector field $X$ and any 1-form $\alpha$, then $(\Phi_0,\e_1,\e_2)$ is a generalized almost contact triple such that $p_E=2$ and $t_L=n$.
\end{itemize}
\end{examples}
\begin{prop}
Let $\{(\phi_a,\xi_a, \eta_a)\}_{a=1,2,3}$ be a triple almost contact structure and let $S$ be spanned by $\xi_a$ and $\eta_b$ for $a,b=1,2,3$. Let $\e_1=2^{-1/2}(\xi_1+\eta_2), \e_2=2^{-1/2}(\xi_2+\eta_1)$ and let $\mathcal F$ be the set of all $\Phi:\T M\to \T M$ satisfying the following properties:
\begin{itemize}
  \item[i)] $(\Phi,\e_1,\e_2)$ is a generalized almost contact triple;
  \item[ii)] $\Phi(S)\subset S$;
  \item[iii)] the restriction of $\Phi$ to $S^{\bot}$ is of the form $$\left[\begin{array}{cc}\phi_3&0\\0&-\phi^*_3\end{array}\right];$$
  \item[iv)] $\langle \Phi\xi_a,\eta_b\rangle=-\langle \Phi\xi_b,\eta_a\rangle$ for all $a,b\in \{1,2,3\}$.
\end{itemize}
Then $${\mathcal F}=\{\Phi_0, \sigma\Phi_0, \tau\Phi_0, \sigma\tau\Phi_0\}$$
where $\Phi_0$ is as in Example \ref{examplestype}(e) and $\sigma, \tau: \T M\to \T M$ are the involutions
$$\sigma:\quad \xi_i\leftrightarrow -\xi_i, \eta_i\leftrightarrow -\eta_i,$$
$$\tau:\quad \xi_1\leftrightarrow \xi_2, \eta_1\leftrightarrow \eta_2, \xi_3\leftrightarrow -\eta_3.$$
\end{prop}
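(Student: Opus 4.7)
The plan is to prove the double inclusion $\mathcal F \supseteq \{\Phi_0, \sigma\Phi_0, \tau\Phi_0, \sigma\tau\Phi_0\}$ and $\mathcal F \subseteq \{\Phi_0, \sigma\Phi_0, \tau\Phi_0, \sigma\tau\Phi_0\}$. For the first inclusion, first extend $\sigma$ and $\tau$ to $\T M$ by declaring them to act as the identity on $S^{\bot}$; this is forced if the products are to satisfy iii). A direct inner-product computation on $\{\xi_a, \eta_a\}_{a=1,2,3}$ shows that $\sigma$ and $\tau$ are orthogonal involutions, and checking their action on the given generators of $S$ (together with the explicit formulas for $\Phi_0$) shows that both commute with $\Phi_0$, that $\sigma$ negates $\e_1, \e_2$, and that $\tau$ swaps them. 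Axioms i)--iv) then transfer from $\Phi_0$ to each of $\sigma\Phi_0, \tau\Phi_0, \sigma\tau\Phi_0$ via straightforward checks using these commutations, orthogonality, and identities of the form $(\sigma\Phi_0)^* = \Phi_0^* \sigma = -\sigma\Phi_0$ and $(\sigma\Phi_0)^2 = \Phi_0^2$.

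For the converse inclusion, any $\Phi \in \mathcal F$ is completely determined by its restriction to $E^{\bot} \cap S$: it vanishes on $E$ by i), is prescribed on $S^{\bot}$ by iii), and preserves $S^{\bot}$ as a consequence of ii) combined with skew-adjointness. The key is to introduce the null basis $\e_1' := \xi_1 - \eta_2$, $\e_2' := \xi_2 - \eta_1$, $\xi_3$, $\eta_3$ of $E^{\bot} \cap S$; with respect to this basis the inner product is block-diagonal for the \emph{orthogonal} splitting $E^{\bot} \cap S = W_1 \oplus W_2$, where $W_1 := {\rm span}(\e_1', \e_2')$ and $W_2 := {\rm span}(\xi_3, \eta_3)$ are both signature-$(1,1)$ planes. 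Write $\Phi$ in the corresponding block form. A skew endomorphism of a $(1,1)$-plane takes the form ${\rm diag}(\alpha, -\alpha)$ in its null basis, and condition iv) with $a = b$ annihilates $\alpha$ on each diagonal block, so $\Phi$ swaps $W_1$ and $W_2$.

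With the diagonal blocks eliminated, $\Phi^2 = -{\rm Id}$ together with skew-adjointness forces the off-diagonal block $B \colon W_1 \to W_2$ to be an isometry between hyperbolic planes; condition iv) for the remaining pairs $(1,3)$ and $(2,3)$ then imposes the two antisymmetry relations $b_{11} = -b_{22}$ and $b_{12} = -b_{21}$ on the matrix entries of $B$. Combined with the constraints $b_{11}b_{12} = b_{21}b_{22} = 0$ and $b_{11}b_{22} + b_{12}b_{21} = -2$ coming from $\Phi^2 = -{\rm Id}$, this yields a system with exactly four real solutions, which by direct comparison with the first half correspond to the four operators in the statement. The main obstacle is the initial choice of null basis making the inner product split cleanly as $W_1 \oplus W_2$; once this reduction is in place the rigidity is essentially forced, since iv) cuts the otherwise one-parameter family of isometries $W_1 \to W_2$ down to the Klein four-group $\{1, \sigma, \tau, \sigma\tau\}$ acting on $\Phi_0$.
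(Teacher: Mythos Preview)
Your proof is correct and follows a genuinely different route from the paper's.

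The paper works directly in the basis $\{\xi_a,\eta_b\}$: it writes $\Phi(\xi_i)=\sum_j a_{ij}\xi_j+b_{ij}\eta_j$ and $\Phi(\eta_i)=\sum_j c_{ij}\xi_j+d_{ij}\eta_j$, then uses skew-adjointness, the vanishing $\Phi(\e_1)=\Phi(\e_2)=0$, condition iv), and the $\Phi^2$-axiom to reduce everything to the three scalar relations $a_{ij}=-a_{ji}$, $2a_{13}^2+2a_{23}^2=1$, $a_{13}a_{23}=0$, giving four solutions.  The paper does not explicitly verify the inclusion $\{\Phi_0,\sigma\Phi_0,\tau\Phi_0,\sigma\tau\Phi_0\}\subseteq\mathcal F$.

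Your argument is more structural: you first show the four candidates lie in $\mathcal F$ by exploiting that $\sigma,\tau$ are orthogonal involutions commuting with $\Phi_0$, and for the converse you pass to the basis $\e_1',\e_2',\xi_3,\eta_3$ adapted to the \emph{orthogonal} splitting $E^\bot\cap S=W_1\oplus W_2$.  This makes the vanishing of the diagonal blocks transparent (skewness plus condition iv) with $a=b$), and reduces the problem to classifying a single $2\times 2$ block $B:W_1\to W_2$ subject to $b_{11}=-b_{22}$, $b_{12}=-b_{21}$, $b_{11}b_{12}=b_{21}b_{22}=0$, $b_{11}b_{22}+b_{12}b_{21}=-2$.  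What this buys is a cleaner explanation of \emph{why} exactly four solutions appear: the Klein four-group $\{1,\sigma,\tau,\sigma\tau\}$ acts simply transitively on the admissible isometries $W_1\to W_2$.  The paper's coordinate calculation is shorter but less illuminating on this point.
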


\noindent
\begin{proof} Any morphism $\Phi$ satisfying the third condition must be such that, for all $X\in \Gamma(TM)$
$$\Phi\left(X-\sum_i\eta_i(X)\xi_i\right)=\phi_3\left(X-\sum_i\eta_i(X)\xi_i\right)=\phi_3(X)-\eta_1(X)\xi_2+\eta_2(X)\xi_1\,,$$
so that
$$\Phi(X)=\phi_3(X)-\eta_1(X)\xi_2+\eta_2(X)\xi_1+\sum_i\eta_i(X)\Phi(\xi_i)\,.$$
Analogously, for all $\alpha\in\Gamma(T^*M)$,
$$\Phi(\alpha)=-\phi^*_3(\alpha)-\alpha(\xi_1)\eta_2-\alpha(\xi_2)\eta_1+\sum_i\alpha(\xi_i)\Phi(\eta_i)\,.$$
Now, write
$$\Phi(\xi_i)=\sum_ja_{ij}\xi_j+b_{ij}\eta_j,\quad \Phi(\eta_i)=\sum_jc_{ij}\xi_j+d_{ij}\eta_j\,.$$
From $\Phi^*=-\Phi$ we get $a_{ij}=-d_{ji}$, $b_{ij}=-b_{ji}$, $c_{ij}=-c_{ji}$, and since $\Phi(\xi_1+\eta_2)=\Phi(\xi_2+\eta_1)=0$
we obtain
$$a_{11}=-a_{22}=c_{12}=b_{21},\quad a_{12}=a_{21}=0\,,$$
$$a_{13}=c_{32},\quad a_{23}=c_{31},\quad a_{31}=b_{23},\quad a_{32}=b_{13}\,.$$
Therefore, $\Phi$ is uniquely determined by the coefficients $a_{ij}$. From
\[
\Phi^2(\xi_1)=2^{-1}(-\xi_1+\eta_2)\,,\quad \Phi^2(\xi_2)=2^{-1}(-\xi_2+\eta_1)\,, \quad\Phi^2(\xi_3)=-\xi_3\,
\]
and $\langle \Phi\xi_a,\eta_b\rangle=-\langle \Phi\xi_b,\eta_a\rangle$ we obtain the conditions $$a_{ij}=-a_{ji},$$
$$2a_{13}^2+2a_{23}^2=1\quad\mbox{ and }\quad a_{13}a_{23}=0,$$
which yield the required four solutions. The morphism $\Phi_0$ corresponds to choosing $a_{23}=0$, $a_{13}>0$.
\end{proof}
\begin{theorem}\label{pE=1} Let $(E,L)$ be a generalized almost contact structure on $M$, such that ${\rm dim}\ M=2n+1$ and $p_E=1$. Then
\begin{itemize}
  \item[i)] $(E,L)$ is isomorphic to a generalized almost contact structure represented by a Poon-Wade triple;
  \item[ii)] if $t_L=1$, then $(E,L)$ is isomorphic to a generalized almost contact structure represented by an almost cosymplectic structure;
  \item[iii)] if $t_L=n+1$, then $(E,L)$ is isomorphic to a generalized almost contact structure represented by an almost contact structure.
\end{itemize}
\end{theorem}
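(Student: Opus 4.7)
My plan is to handle (i) first and then reduce (ii) and (iii) to it by normalizing the Poon-Wade representative produced in (i).

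For (i), the hypotheses on $E$ (signature $(1,1)$ and $p_E=1$) imply that $K := E \cap T^*M$ is a rank-one isotropic subbundle of $T^*M$, hence globally generated by a nowhere-vanishing $1$-form $\beta$, while the complementary isotropic line bundle $K' \subset E$ has a nowhere-vanishing generator of the form $X+\alpha$ with $X$ a vector field. Isotropy of $X+\alpha$ forces $\alpha(X)=0$, which is precisely the compatibility needed to solve $i_X \omega = -\alpha$ for some smooth real $2$-form $\omega$ on $M$ (not necessarily closed); I would assemble such an $\omega$ by partition of unity from local solutions. The gauge shift $e^\omega$ is then an orthogonal bundle morphism that fixes $\beta$ and sends $X+\alpha$ to $X$, producing a Poon-Wade representative for the isomorphic structure $(e^\omega(E), e^\omega(L))$.

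For (ii) and (iii) I may therefore assume from the start that $(E,L)$ is represented by a Poon-Wade triple $(\Phi,\xi,\eta)$, so that $E^\bot = \ker\eta \oplus {\rm Ann}(\xi)$. In case (ii), $t_L=1$ forces $a|_L : L \to \ker\eta \otimes \C$ to be an isomorphism, which realizes $L$ as the graph of a complex skew form $\Theta \in \Lambda^2(\ker\eta)^* \otimes \C$; the condition $L \cap \overline{L}=0$ then translates exactly into the nondegeneracy of ${\rm Im}(\Theta)$. Extending $-{\rm Re}(\Theta)$ to a real $2$-form $\omega$ on $M$ with $i_\xi \omega = 0$ (via the splitting $TM = \R\xi \oplus \ker\eta$), the gauge shift $e^\omega$ preserves $E$ and transforms $L$ into the graph of $-\sqrt{-1}\theta$, where $\theta := -{\rm Im}(\Theta)$. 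A direct comparison with Example \ref{examplesoftriples}(b) identifies this as the structure coming from the almost cosymplectic structure $(\theta, \eta)$.

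For (iii), $t_L=n+1$ gives $\dim_\C a(L)=n$. From $L + \overline{L} = E^\bot \otimes \C$ and the surjectivity of $a|_{E^\bot}$ onto $\ker\eta$, I would deduce $a(L) + \overline{a(L)} = \ker\eta \otimes \C$; combined with $\dim_\C a(L)=n$, this forces $a(L) \cap \overline{a(L)} = 0$, so $a(L) = T^{1,0}_\phi$ for a unique almost complex structure $\phi$ on $\ker\eta$. Extending $\phi$ to $TM$ by $\phi(\xi)=0$ defines an almost contact structure $(\phi,\xi,\eta)$ whose associated $L_{\rm ac} = T^{1,0}_\phi \oplus {\rm Ann}(T^{1,0}_\phi \oplus \R\xi)\otimes \C$ shares with $L$ the cotangent piece $L \cap {\rm Ann}(\xi)\otimes\C$ by isotropy and dimension count. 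The remaining discrepancy between $L$ and $L_{\rm ac}$ is encoded in a complex $(2,0)$-form $\mu$ on $\ker\eta$, which I would eliminate by a gauge shift $e^B$ with $B := -(\mu + \bar\mu)$ (manifestly real, of type $(2,0)+(0,2)$), extended to $M$ so that $i_\xi B = 0$.

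The main technical hurdle throughout is ensuring the B-field corrections can be made global and chosen to preserve $E$. The vanishing of the three compatibility obstructions ($\alpha(X)=0$ in (i), ${\rm Im}(\Theta)$ nondegenerate in (ii), and $a(L) \cap \overline{a(L)} = 0$ in (iii)) is exactly what allows the required real $2$-forms to be assembled via partition of unity or by extension along the splitting $TM = \R\xi \oplus \ker\eta$.
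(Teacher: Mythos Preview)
Your approach is correct and follows the same overall strategy as the paper: normalize by a (not necessarily closed) $B$-field, i.e., an orthogonal shift $e^\omega$. The main differences are presentational.

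For (i), the paper avoids your partition-of-unity construction entirely: once $E$ is framed by $\e_1=X+\alpha$ and $\e_2=\beta$ with $\beta(X)=1$ and $\alpha(X)=0$, the explicit choice $\Omega=\alpha\wedge\beta$ already satisfies $i_X\Omega=-\alpha$, so $e^\Omega\e_1=X$ and $e^\Omega\e_2=\beta$. Your general existence argument works, but this one-line formula is cleaner. (A small point you gloss over: the triviality of the two isotropic line subbundles $K=E\cap T^*M$ and $K'$ is not automatic from ``rank one''; it follows because $E$ admits a global isotropic frame by Proposition~\ref{triples}, so each null line of $E$ is globally spanned.)

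For (ii) and (iii), the paper works with the block decomposition $\Phi=\left[\begin{smallmatrix}A&B\\C&-A^*\end{smallmatrix}\right]$ and writes down the normalizing $2$-form directly from these blocks (e.g.\ $\Omega=\tfrac12 CA$ in case (iii)), whereas you describe $L$ as the graph of a complex skew form over its tangent projection and peel off the real part. These are equivalent: your $\Theta$ is essentially $B^{-1}(i-A)$ in the paper's notation, and your $-\mathrm{Re}(\Theta)$ is the paper's $\Omega$. Your graph-of-$L$ language is closer to the standard generalized complex picture and arguably more conceptual; the paper's block-matrix computation is more self-contained and makes the required $2$-form explicit without invoking the graph description.
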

\begin{proof}Proof of (i): $p_E=1$ if and only if $E$ is generated by $\e_1=X+\alpha$, $\e_2=\beta$. Set $\Omega=\alpha\wedge\beta$, so that $e^{\Omega}\e_1=X$ and $e^{\Omega}\e_2=\beta$. Hence, $(e^{\Omega}\e_1,e^{\Omega}\e_2, e^{\Omega}\circ\Phi\circ e^{-\Omega})$ is a Poon-Wade triple.\\
Proof of (ii): from (i), we can assume up to isomorphism that $E$ is generated by $X, \beta$,  with $\beta(X)=1$, so that
$$E={\rm Ker}(\beta)\oplus {\rm Ann}(X)=:{\mathcal K}\oplus {\mathcal A}\,.$$
Let
$$\Phi=\left[\begin{array}{cc}A&B\\C&-A^*\end{array}\right]\,,$$
where the blocks correspond to the splitting $\T M=TM\oplus T^*M$. The condition
$p_E=1=t_L$ implies $a(L)=a(\overline{L})$, which means that for all $Y+\gamma\in \Gamma(E^{\bot})$ there exists $\widetilde{\gamma}\in \Gamma(T^*M)$ such that
$$Y+\widetilde{\gamma}\in \Gamma(E^{\bot})\,, \quad 2A(Y)+B(\gamma+\widetilde{\gamma})=0\,.$$
This implies the decomposition
$$Y+\gamma=\left(Y+2^{-1}(\gamma+\widetilde{\gamma})\right)+2^{-1}(\gamma-\widetilde{\gamma})$$
and the splitting
$$E^{\bot}=(\Phi(E^{\bot}\cap T^*M))+(E^{\bot}\cap T^*M)=\Phi({\mathcal A})\oplus {\mathcal A}\,.$$
Moreover, $\Phi({\mathcal A}) \cap {\mathcal A}=0$ implies that the map $B_{|{\mathcal A}}:{\mathcal A}\rightarrow TM$ is injective.
Since $B({\mathcal A})={\mathcal K}$, we have $$\Gamma(\Phi({\mathcal A}))=\{Y-A^*B^{-1}Y|Y\in \Gamma({\mathcal K}) \}.$$
If $Y\in \Gamma(TM)$, then $\langle AY, \beta\rangle=\langle\Phi Y,\beta\rangle=0$, hence $AY\in \Gamma({\mathcal K})$.
Moreover, for all $Y\in\Gamma({\mathcal K})$ we have
$$A^*B^{-1}Y=B^{-1}AY.$$
Define the 2-form on $M$:
$$\Omega(Z,W):=\langle B^{-1}AZ,W\rangle-\langle B^{-1}AW,Z\rangle.$$
For all vector fields $W$ and for all $Y\in\Gamma({\mathcal K})$
\[
\Omega(X,W)=0\,,\quad \Omega(Y,W)=2\langle A^*B^{-1}Y, W\rangle\, .
\]
Therefore, $e^{-\Omega}{\mathcal K}=\Phi({\mathcal A})$ and the bundle morphism $\phi:=e^{\Omega}\circ \Phi\circ e^{-\Omega}$ is such that
\[
\phi({\rm Ker}(\beta))={\rm Ann}(X),\quad \phi({\rm Ann}(X))={\rm Ker}(\beta)\,.
\]Proof of (iii): in this case we have $a(L)\cap a(\overline{L})=0$. Since $B({\mathcal A})=a(\Phi({\mathcal A}))\subset a(L)\cap a(\overline{L})$, then $B=0$.
Notice that $\Omega=2^{-1}CA$ is a 2-form since $\Omega^*=2^{-1}(CA)^*=-2^{-1}A^*C=-\Omega$. Moreover, $i_X\Omega=0$ and
\[
\left[\begin{array}{cc}I&0\\ \Omega&I\end{array}\right]\left[\begin{array}{cc}A&0\\C&-A^*\end{array}\right]\left[\begin{array}{cc}I&0\\-\Omega&I\end{array}\right]=\left[\begin{array}{cc}A&0\\0&-A^*\end{array}\right]\,.
\]
Therefore, $(e^{\Omega}E,e^{\Omega}L)$ is an almost contact structure.
\end{proof}

\section{Generalized geometry of the cone}\label{Cone}
Let $\mathbb R_t^+$ be the positive real axis with coordinate $t$. Moreover, let $C(M):=M\times \R^+_t$ be the \emph{cone} over $M$ and let $\pi_1,\pi_2$ the projections onto the two factors. The generalized tangent bundle of the cone is
$$\T C(M)=\pi_1^*\T M \oplus \pi_2^* \T^*\R^+_t\,.$$
The multiplication by a positive real number $t_0$ induces a symmetry $F_{t_0}: \T C(M)\rightarrow \T C(M)$. Upon the identification $M\equiv M\times \{1\}$, we have the inclusion $i:M\hookrightarrow C(M)$. Let $${\mathcal E}'(M):=i^*\T C(M), \quad {\mathcal E}_x'(M)=\T_xM\oplus {\rm span}(\partial_t,dt)\,.$$
A section of ${\mathcal E}'(M)$ is of the form $\x+f\partial_t+gdt$, where $\x\in \Gamma(\T M)$ and $f,g\in C^{\infty}(M)$. The bundle ${\mathcal E}'(M)$ inherits from $\T C(M)$ the inner product
$$\langle \mathbf X_1,\mathbf X_2\rangle=\langle\x_1,\x_2\rangle+2^{-1}(f_1g_2+f_2g_1)\,,$$
and the bracket
\begin{eqnarray}[ \mathbf X_1,\mathbf X_2]_0&=&[\x_1,\x_2]+(a(\x_1)(f_2)-a(\x_2)(f_1))\partial_t+\nonumber\\
                                                   &&+(a(\x_1)(g_2)-a(\x_2)(g_1))dt+2^{-1}(g_2df_1+f_2dg_1)\nonumber\,.\end{eqnarray}
where $\mathbf X_i= \x_i+f_i\partial_t+g_idt$ for $i=1,2$. In particular, $[\ ,\ ]_0$ is the extension of the Dorfman bracket on $\mathbb{T}M$ such that for all $\x,\y\in\Gamma(\mathbb{T}M)$,
\begin{align}
[\x,\y]_0&= [\x,\y],\ &\ \ [\x,dt]_0&=[dt,\x]_0=0,\nonumber \\
[\x,\partial_t]_0&= [\partial_t,\x]_0=0,\   &\ \ [dt,\partial_t]_0&=[\partial_t,dt]_0=0.\nonumber
\end{align}
(The subscript ``0" will be omitted in the rest of the paper.)
\begin{mydef}We say that a quadruple $(\Phi,\e_1,\e_2, \lambda)$, where $\Phi:\T M\rightarrow \T M$ is a base-fixing bundle morphism, $\e_1,\e_2\in \Gamma(\T M)$ and $\lambda\in C^{\infty}(M)$ is a {\it Sekiya quadruple} on $M$ if
\begin{itemize}
\item $\langle\e_1, \e_1\rangle=\langle \e_2,\e_2\rangle=0$, $\langle\e_1, \e_2\rangle=1/2$;
   \item $\Phi^*=-\Phi$;
  \item $\Phi(\e_1)=\lambda\e_1$, $\Phi(\e_2)=-\lambda\e_2$ ;
  \item $\Phi^2\x=-\x+2(1+\lambda^2)(\langle\x,\e_2\rangle\e_1+\langle\x,\e_1\rangle\e_2)$.
\end{itemize}\end{mydef}
\begin{rem} Observe that there is a canonical bijection between the set of $\R^+$-invariant generalized almost complex structures on $C(M)$ and the set of $S^1$-invariant generalized almost complex structures on $M\times S^1$, through exponentiation. Moreover, any $\R^+$-invariant generalized almost complex structure $L$ on $C(M)$ is uniquely determined by its restriction to $M$. Therefore, we have a canonical bijection between $\R^+$-invariant generalized almost complex structures and base-preserving bundle morphisms $J:{\mathcal E}'(M)\rightarrow {\mathcal E}'(M)$ such that $J^2=-{\rm Id}$, $J^*=-J$.\end{rem}

\begin{prop}\label{Bijection-Sekiya} Let $J:{\mathcal E}'(M)\rightarrow {\mathcal E}'(M)$ be a base-preserving bundle morphism such that $J^2=-{\rm Id}$, $J^*=-J$. Then, $(\Phi,\e_1,\e_2, \lambda)$ where
\begin{align}
\lambda&=2\langle J(dt),\partial_t\rangle;\nonumber\\
\e_1&=(1+\lambda^2)^{-1/2}\left(J(\partial_t)+\lambda\partial_t\right);\nonumber\\
\e_2&=(1+\lambda^2)^{-1/2}\left(J(dt)-\lambda dt\right)\nonumber\end{align}
and
$$\Phi(\x)=J(\x)+2\langle\partial_t,J(\x)\rangle dt + 2\langle dt, J(\x)\rangle \partial_t$$
is a Sekiya quadruple. Viceversa, any Sekiya quadruple defines a unique $J$ with the above properties.
 \end{prop}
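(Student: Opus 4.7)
The strategy is to exploit the canonical splitting $\mathcal{E}'(M) = \T M \oplus \mathrm{span}(\partial_t, dt)$ and to interpret the data as follows: $\Phi$ is essentially the $\T M$-to-$\T M$ block of $J$, while $\e_1, \e_2$ and $\lambda$ encode the off-diagonal information between $\T M$ and $\mathrm{span}(\partial_t, dt)$. The subtlety is that this splitting is not orthogonal (since $\langle \partial_t, dt\rangle = 1/2$), so some care is needed in computing projections and pairings.

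Starting from $J$, I would set $\mathbf{u} := J(\partial_t)$ and $\mathbf{v} := J(dt)$. Using $J^* = -J$ one checks that $\langle \mathbf{u}, \partial_t\rangle = 0$ and $\langle \mathbf{v}, dt\rangle = 0$, which forces the $dt$-coefficient of $\mathbf{u}$ and the $\partial_t$-coefficient of $\mathbf{v}$ to vanish. The constant $\lambda = 2\langle J(dt), \partial_t\rangle$ is then both minus the $\partial_t$-coefficient of $\mathbf{u}$ and equal to the $dt$-coefficient of $\mathbf{v}$. From $J^2 = -\mathrm{Id}$ combined with $J^* = -J$, the sections $\mathbf{u}, \mathbf{v}$ are isotropic with $\langle \mathbf{u}, \mathbf{v}\rangle = 1/2$. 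Writing $\mathbf{u} = \mathbf{u}_0 - \lambda \partial_t$ and $\mathbf{v} = \mathbf{v}_0 + \lambda dt$ with $\mathbf{u}_0, \mathbf{v}_0 \in \Gamma(\T M)$, these relations translate into $\langle \mathbf{u}_0, \mathbf{u}_0\rangle = \langle \mathbf{v}_0, \mathbf{v}_0\rangle = 0$ and $\langle \mathbf{u}_0, \mathbf{v}_0\rangle = (1+\lambda^2)/2$. Hence the normalized sections $\e_1 = (1+\lambda^2)^{-1/2}\mathbf{u}_0$ and $\e_2 = (1+\lambda^2)^{-1/2}\mathbf{v}_0$ form an isotropic frame with $\langle \e_1, \e_2\rangle = 1/2$, matching the formulas in the statement.

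Next, I interpret the formula for $\Phi$ as the projection of $J$ onto the $\T M$ factor along $\mathrm{span}(\partial_t, dt)$, so that $\Phi^* = -\Phi$ is an immediate consequence of $J^* = -J$ together with the pairing structure of the splitting. Applying $J$ to $\e_1 = (1+\lambda^2)^{-1/2}(J(\partial_t) + \lambda \partial_t)$ and using $J^2 = -\mathrm{Id}$ yields $J(\e_1) = \lambda \e_1 - (1+\lambda^2)^{1/2}\partial_t$, whose $\T M$-component is $\lambda \e_1$; hence $\Phi(\e_1) = \lambda \e_1$, and analogously $\Phi(\e_2) = -\lambda \e_2$. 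The main computational step is the $\Phi^2$ identity: writing $J(\x) = \Phi(\x) + 2\langle dt, J(\x)\rangle \partial_t + 2\langle \partial_t, J(\x)\rangle dt$, applying $J$ again, invoking $J^2 = -\mathrm{Id}$ and $J^* = -J$ to rewrite the scalar coefficients as $-2\langle \mathbf{v}, \x\rangle$ and $-2\langle \mathbf{u}, \x\rangle$, and finally projecting onto $\T M$, produces exactly $\Phi^2(\x) = -\x + 2(1+\lambda^2)(\langle \x, \e_2\rangle \e_1 + \langle \x, \e_1\rangle \e_2)$ once the normalization factor $(1+\lambda^2)^{1/2}$ is tracked.

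For the converse, a Sekiya quadruple $(\Phi, \e_1, \e_2, \lambda)$ determines $J$ uniquely by inverting the formulas: set $J(\partial_t) := (1+\lambda^2)^{1/2}\e_1 - \lambda \partial_t$, $J(dt) := (1+\lambda^2)^{1/2}\e_2 + \lambda dt$, and for $\x \in \Gamma(\T M)$, $J(\x) := \Phi(\x) - 2(1+\lambda^2)^{1/2}\left(\langle \e_2, \x\rangle \partial_t + \langle \e_1, \x\rangle dt\right)$, the last formula being forced by $J^* = -J$. The identities $J^2 = -\mathrm{Id}$ and $J^* = -J$ then follow directly from the four Sekiya relations by a symmetric calculation on the spanning sections $\x, \partial_t, dt$. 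The main bookkeeping challenge throughout is tracking the $\partial_t, dt$ contributions and the factor $(1+\lambda^2)^{1/2}$ under the projections in both directions, but once the block decomposition is set up, each identity reduces to a short verification.
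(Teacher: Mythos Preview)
Your proof is correct and follows essentially the same strategy as the paper, which simply asserts that the forward direction is ``easy to see'' and, for the converse, writes down $J$ explicitly on the frame $\{\e_1,\e_2,\partial_t,dt\}$ together with $J=\Phi$ on $E^\perp$. Your version is in fact more detailed: you carry out the isotropy and pairing computations that the paper omits, and for the converse you give a single uniform formula for $J$ on all of $\T M$ (rather than splitting into $E^\perp$ and ${\rm span}(\e_1,\e_2)$), which one checks agrees with the paper's piecewise definition.
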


\begin{proof} It easy to see that $(\Phi,\e_1,\e_2, \lambda)$ defined above is a Sekiya quadruple.
Conversely, starting with a Sekiya quadruple $(\Phi,\e_1,\e_2, \lambda)$, let $J:{\mathcal E}'(M)\rightarrow {\mathcal E}'(M)$, be defined by setting $J(\x)=\Phi(\x)$  for all  $\x\in \Gamma(\T M)$ such that $\langle \x,\e_1\rangle=\langle \x,\e_2\rangle=0$, and
\begin{align}
J(\e_1)&=\lambda\e_1-(1+\lambda^2)^{1/2}\partial_t;\nonumber\\
J(\e_2)&=-\lambda\e_2-(1+\lambda^2)^{1/2}dt;\nonumber\\
J(\partial_t)&=(1+\lambda^2)^{1/2}\e_1-\lambda\partial_t;\nonumber\\
J(dt)&=(1+\lambda^2)^{1/2}\e_2+\lambda dt.\nonumber
\end{align}
Then, $J$ satisfies $J^2=-{\rm Id}$, $J^*=-J$. It is easy to see that these constructions are inverse of each other.
\end{proof}
\begin{rem}\label{Sek} In particular, under the bijection of Proposition \ref{Bijection-Sekiya}, the set of generalized almost contact triples (viewed as Sekiya quadruples with $\lambda=0$) corresponds to the set ${\rm Sek}_0(M)$ of $\R^+$-invariant generalized almost complex structures on $C(M)$ such that $J({\rm span}(\partial_t, dt))\subset \T M$. Moreover, the $C^{\infty}(M, {\rm O}(1,1))$-action on triples lifts to an action on ${\rm Sek}_0(M)$.
\end{rem}
\begin{mydef}
We say that a generalized almost contact triple is associated to $J\in {\rm Sek}_{0}(M)$ if they correspond to each other via  Proposition \ref{Bijection-Sekiya}.
Moreover, a generalized almost contact structure $(E,L)$ on $M$ is represented by $J\in {\rm Sek}_0(M)$ if the triple associated to $J$ represents $(E,L)$.\end{mydef}
\begin{theorem}\label{J_0}
Every generalized almost contact structure $(E,L)$ on $M$ is represented by an element of ${\rm Sek}_0(M)$. Moreover, two elements $J_1, J_2\in {\rm Sek}_0(M)$ represent the same generalized almost contact structure $(E,L)$ if and only if they belong to the same  ${\rm O}(E)$-orbit.
\end{theorem}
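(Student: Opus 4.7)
The plan is to reduce the theorem to the already established bijection of Proposition \ref{Bijection-Sekiya} and the classification of triples in Proposition \ref{triples}, with Remark \ref{Sek} providing the link between the two.

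For the existence part, I would start with a generalized almost contact structure $(E,L)$ and apply Proposition \ref{triples} to produce a generalized almost contact triple $(\Phi,\e_1,\e_2)$ representing it. Viewing this triple as a Sekiya quadruple $(\Phi,\e_1,\e_2,0)$ with $\lambda=0$, the bijection of Proposition \ref{Bijection-Sekiya} produces an $\mathbb{R}^+$-invariant generalized almost complex structure $J$ on $C(M)$. The explicit formulas in Proposition \ref{Bijection-Sekiya} specialize, when $\lambda=0$, to $J(\partial_t)=\e_1$ and $J(dt)=\e_2$, so that $J$ sends ${\rm span}(\partial_t,dt)$ into $\mathbb{T}M$. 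Hence $J\in {\rm Sek}_0(M)$ by the characterization given in Remark \ref{Sek}. By construction, the triple associated to $J$ is $(\Phi,\e_1,\e_2)$, so $J$ represents $(E,L)$.

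For the second part, suppose $J_1,J_2\in {\rm Sek}_0(M)$ both represent $(E,L)$, and let $(\Phi_i,\e_1^{(i)},\e_2^{(i)})$ be their associated triples under Proposition \ref{Bijection-Sekiya}. By the definition preceding the theorem, both triples represent $(E,L)$. Proposition \ref{triples} then asserts that these two triples differ by a unique element of $\mathrm{O}(E)\simeq C^\infty(M,\mathrm{O}(1,1))$. Conversely, since the $\mathrm{O}(E)$-action on triples lifts to an action on ${\rm Sek}_0(M)$ compatible with the bijection of Proposition \ref{Bijection-Sekiya} (Remark \ref{Sek}), two $J$'s in the same $\mathrm{O}(E)$-orbit correspond to homothetic triples, which by Proposition \ref{triples} represent the same $(E,L)$.

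The only point that requires genuine verification is that the bijection of Proposition \ref{Bijection-Sekiya} restricts to a bijection between triples (Sekiya quadruples with $\lambda=0$) and ${\rm Sek}_0(M)$. This amounts to checking that $\lambda=2\langle J(dt),\partial_t\rangle$ vanishes precisely when $J({\rm span}(\partial_t,dt))\subseteq \mathbb{T}M$, which follows from the inverse formulas in Proposition \ref{Bijection-Sekiya} together with the orthogonality $\langle \e_i,\partial_t\rangle=\langle\e_i,dt\rangle=0$. The main conceptual step, and arguably the only nontrivial one, is therefore the compatibility of the $\mathrm{O}(E)$-action asserted in Remark \ref{Sek}; once this is accepted, the theorem becomes a direct assembly of Proposition \ref{triples} and Proposition \ref{Bijection-Sekiya}.
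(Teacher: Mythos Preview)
Your proposal is correct and follows essentially the same approach as the paper: both reduce the theorem directly to Propositions \ref{triples} and \ref{Bijection-Sekiya} together with Remark \ref{Sek}. Your write-up is somewhat more detailed in spelling out why $\lambda=0$ corresponds to $J({\rm span}(\partial_t,dt))\subset\T M$ and how the $\mathrm{O}(E)$-action intertwines with the bijection, but the underlying argument is identical to the paper's.
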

\begin{proof}
The first statement follows directly from Propositions \ref{triples}, \ref{Bijection-Sekiya} and Remark \ref{Sek}. Now, let $J, \tilde J\in {\rm Sek}_0(M)$ and let $(\Phi, \e_1, \e_2)$ and $(\tilde \Phi, \tilde \e_1, \tilde \e_2)$ be the associated triples. Then, $J$ and  $\tilde J$ represent the same generalized almost contact structure $(E,L)$ if and only if
$\Phi=\tilde \Phi$ and $\tilde \e_i= R \cdot \e_i$ with $R\in {\rm O}(E)$. Then, $J=\tilde JR$.

\end{proof}

\begin{prop}
Let $(E,L)$ be represented by $J\in{\rm Sek}_0(M)$ of type $t_J$. Then
\begin{itemize}
  \item[i)] $0\le t_L-t_J\le 1$;
  \item[ii)] $t_J=t_L$ if and only if $a(J(dt))\in a(L)$;
  \item[iii)] if $p_E=1$, then  $t_J=t_L$ if and only if $J(dt))$ is a 1-form on $M$.
\end{itemize}

\end{prop}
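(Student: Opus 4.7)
The plan is to realize $L_J$ explicitly in terms of $L$, $\e_1$ and $\e_2$, project by the anchor $a$, and count dimensions. Applying Proposition~\ref{Bijection-Sekiya} with $\lambda=0$ gives $\e_1=J(\partial_t)$, $\e_2=J(dt)$, $J(\e_1)=-\partial_t$, $J(\e_2)=-dt$, and $J=\Phi$ on $E^\bot\subset \T M$. A direct check shows that $\partial_t-\sqrt{-1}\,\e_1$ and $dt-\sqrt{-1}\,\e_2$ lie in the $+\sqrt{-1}$-eigenspace of $J$; combined with the fact that the $+\sqrt{-1}$-eigenbundle of $\Phi_{|E^\bot}$ is $L$, this yields
\[
L_J\ =\ L\ \oplus\ \C(\partial_t-\sqrt{-1}\,\e_1)\ \oplus\ \C(dt-\sqrt{-1}\,\e_2)\,.
\]

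Under the anchor $a:{\mathcal E}'(M)\to TM\oplus\R\partial_t$, the summand $a(L)$ sits inside $TM\otimes\C$ with complex dimension $(2n+1)-t_L$; the image of $\partial_t-\sqrt{-1}\,\e_1$ has a nonzero $\partial_t$-component, hence contributes one genuinely new complex dimension beyond $TM\otimes\C$; and the image of $dt-\sqrt{-1}\,\e_2$ is a multiple of $a(\e_2)$, contributing an extra complex dimension if and only if $a(\e_2)\notin a(L)$. Since $\dim_\C(TM\oplus\R\partial_t)=2n+2$, the dimension count produces
\[
t_J\ =\ t_L-\epsilon,\qquad \epsilon=\begin{cases}0,& a(\e_2)\in a(L),\\ 1,& \text{otherwise,}\end{cases}
\]
which proves (i) and (ii) simultaneously.

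For (iii), ($\Leftarrow$) is immediate from (ii), since $\e_2\in T^*M$ forces $a(\e_2)=0$. For ($\Rightarrow$), assume $p_E=1$ and $a(\e_2)\in a(L)$, and write $\e_i=X_i+\alpha_i$. Since $p_E=1$, the vectors $X_1,X_2$ are linearly dependent. Assuming $X_2\neq 0$, write $X_1=cX_2$: isotropy of $\e_2$ gives $\alpha_2(X_1)=c\alpha_2(X_2)=0$, so the pairing $\alpha_1(X_2)+\alpha_2(X_1)=1$ forces $\alpha_1(X_2)=1$; isotropy of $\e_1$ then gives $0=\alpha_1(X_1)=c\alpha_1(X_2)=c$, hence $X_1=0$. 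A short computation yields $a(E^\bot)=\ker\alpha_1$, which does not contain $X_2$ since $\alpha_1(X_2)=1$. Because $X_2$ is real and $a(L)\subset a(E^\bot)\otimes\C$, this contradicts $X_2\in a(L)$. Therefore $X_2=0$, i.e. $J(dt)\in T^*M$.

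The main obstacle is precisely this converse in (iii): sheer dimension counting does not preclude $a(\e_2)\in a(L)$ when $p_E=1$; one has to combine the isotropy conditions with the pairing $\langle\e_1,\e_2\rangle=1/2$ to reduce $\e_1$ to a pure $1$-form $\alpha_1$, which then serves as the explicit linear functional witnessing that $X_2\notin a(E^\bot)$.
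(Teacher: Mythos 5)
Your proof is correct and follows essentially the same route as the paper: identify $L_J$ as the span of $L$ together with the two extra eigenvectors coming from $\e_1,\e_2$, and compute $t_J=t_L-\epsilon$ by counting the anchor image, with $\epsilon$ detecting whether $a(\e_2)\in a(L)$. For (iii) the paper simply invokes $\dim(a(E)\cap a(E^{\bot}))=2p_E-2$ to conclude $a(E\otimes\C)\cap a(L)=0$; your explicit computation with $\alpha_1(X_2)=1$ and $a(E^{\bot})=\ker\alpha_1$ is just an unpacked verification of that same fact.
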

\begin{proof} Let $\widetilde{a}:\T C(M)\rightarrow TC(M)$ be the obvious projection. The maximal isotropic subbundle $L_J\subseteq\T C(M)$ that defines $J$ is spanned by $L$, $\e_1+\sqrt{-1}\partial_t$ and $\e_2+\sqrt{-1}dt$, where ${\rm span}(\e_1,\e_2)=E$. Therefore,
\[
\dim \tilde a(L_J) = \dim a (L)+ 1 +\dim a(\C \e_2) - \dim (a(L)\cap\dim a(\C \e_2))\,,
\]
from which it follows that
\[
t_J={\rm dim}\ M+1-\dim \tilde a (L_J) = t_L - \dim a (\C \e_2) +\dim (a(L)\cap\dim a(\C \e_2))\,,
\]
which proves $(i)$ and (ii). Finally, assume  $p_E=1$. Then
$${\rm dim}\left(a(E)\cap a(E^{\bot})\right)=2p_E-2$$
implies $a(E\otimes\C)\cap a(L)=0$ and (iii).
\end{proof}

%Generalized almost contact strucures on the 3-sphere
\begin{example}\label{Sphere}(New generalized almost contact structures on $S^3$ and $\R\mathbb{P}^3$)
Consider the standard identification of $C(S^3)$ with $\mathbb{R}^4\setminus \{0\}$. The Euler vector field $\partial_t$ is given by
$$\partial_t=x_1 \frac{\partial}{\partial x_1}+x_2\frac{\partial}{\partial x_2}+x_3 \frac{\partial}{\partial x_3}+x_4 \frac{\partial}{\partial x_4}\,.$$
The action of ${\rm Sp}(1)$ on $\mathbb{R}^4$ is generated by the vector fields
$$V_1:=x_2 \frac{\partial}{\partial x_1}-x_1\frac{\partial}{\partial x_2}+x_4 \frac{\partial}{\partial x_3}-x_3 \frac{\partial}{\partial x_4}\,,$$
$$V_2:=x_3 \frac{\partial}{\partial x_1}-x_4\frac{\partial}{\partial x_2}-x_1 \frac{\partial}{\partial x_3}+x_2 \frac{\partial}{\partial x_4}\,,$$
$$V_3:=x_4 \frac{\partial}{\partial x_1}+x_3\frac{\partial}{\partial x_2}-x_2 \frac{\partial}{\partial x_3}-x_1 \frac{\partial}{\partial x_4}\,.$$
In particular, $\left\{\partial_t,V_1,V_2,V_3\right\}$ is a global frame of $TM$. Let $\left\{dt ,\nu_1,\nu_2,\nu_3\right\}$ denote the dual frame. Consider $j\in {\rm Sek}_0(S^3)$ defined by
\[
j(V_1)=\partial_t\,,\quad j(V_2)=V_3\,,\quad j^*(\nu_1)=-dt\,, \quad j^*(\nu_2)=-\nu_3.
\]
Given $f,g\in C^{\infty}(S^3)$, consider the $\R^+$-invariant bivector
\[
\Lambda=f\cdot\left(-V_1\wedge V_3+V_2\wedge \partial_t\right)+g\cdot \left(V_1\wedge V_2+V_3\wedge\partial_t\right)\,.
\]
Since $j\Lambda = \Lambda j^*$,
\[
J=
\left[\begin{array}{cc}
j&\Lambda\\
0 &-j^*\end{array}\right]
\]
is also in ${\rm Sek}_0(S^3)$. Moreover,
\begin{align}\label{e1e2}
\e_1:=J\left(\partial_t\right)=-V_1; \quad \e_2:=J\left(dt\right)=-\nu_1-fV_2-gV_3,\end{align}
which implies that the generalized almost contact structure $(E,L)$ on $S^3$ represented by $J$ has geometric type
\[
(p_E(x),t_L(x))=\left\{\begin{array}{cl}(1,2)&\mbox{ if } f(x)=0=g(x)\\\\
                                    (2,1)&\mbox{ otherwise\,.}\end{array}\right.
\]
If $f$ and $g$ are invariant under the antipodal involution, then $(E,L)$ descends to a generalized almost contact structure on $\R\mathbb{P}^3$, of the same geometric type.
\end{example}
\section{Normality and integrability}
\begin{mydef}A generalized almost contact structure $(E,L)$ is called \emph{normal} if there exists $J\in {\rm Sek}_0(M)$ which is integrable and represents $(E,L)$. Moreover, a triple $(\Phi, \e_1,\e_2)$ is \emph{normal} if the generalized almost complex structure associated it is integrable.\end{mydef}

\begin{prop}\label{normality} A triple $(\Phi, \e_1,\e_2)$ is normal if and only if the following conditions hold:
\begin{itemize}
  \item[i)] for all $\x,\y\in\Gamma(E^{\bot})$,
  $$[\Phi\x,\Phi\y]-[\x,\y]-\Phi([\Phi\x,\y]+[\x,\Phi\y])=0;$$
  \item[ii)] for all $\x\in\Gamma(E^{\bot})$,
  $\Phi[\x,\e_i]=[\Phi\x,\e_i]$ ($i=1,2$),
  \item[iii)] $[\e_1,\e_2]=0$,
\end{itemize}
where $E={\rm span}(\e_1,\e_2)$.
\end{prop}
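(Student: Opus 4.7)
The plan is to unpack the integrability of the $\R^+$-invariant generalized almost complex structure $J$ on $C(M)$ associated to the triple via Proposition \ref{Bijection-Sekiya} by testing the involutivity of its $+\sqrt{-1}$-eigenbundle $L_J\subset\T C(M)\otimes\C$ under the Dorfman bracket. From the formulas of Proposition \ref{Bijection-Sekiya} specialized to $\lambda=0$, a direct computation gives $J(\e_1+\sqrt{-1}\partial_t)=\sqrt{-1}(\e_1+\sqrt{-1}\partial_t)$ and $J(\e_2+\sqrt{-1}dt)=\sqrt{-1}(\e_2+\sqrt{-1}dt)$, so $L_J$ is locally generated (as a $C^{\infty}$-module, extended $\R^+$-invariantly to $C(M)$) by $L$ together with $\e_1+\sqrt{-1}\partial_t$ and $\e_2+\sqrt{-1}dt$. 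Involutivity of $L_J$ then reduces to the Dorfman bracket of each pair of these generators landing in $\Gamma(L_J)$; the self-brackets among $\e_i+\sqrt{-1}(\cdot)$ vanish by isotropy, leaving four families of pairs to analyse.

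I would handle the three easier families first. Using the extended rules $[\x,\partial_t]=[\x,dt]=[\partial_t,dt]=0$ for $\x\in\Gamma(\T M)$, the bracket $[\e_1+\sqrt{-1}\partial_t,\e_2+\sqrt{-1}dt]$ collapses to $[\e_1,\e_2]\in\Gamma(\T M)$; since this is real and $L_J\cap\overline{L_J}=0$, it lies in $L_J$ if and only if $[\e_1,\e_2]=0$, producing (iii). For $\w\in\Gamma(L)$, the bracket $[\w,\e_1+\sqrt{-1}\partial_t]=[\w,\e_1]$ has no $\partial_t,dt$ component, so membership in $L_J$ is equivalent to $[\w,\e_1]\in\Gamma(L)$. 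Applying the Courant algebroid compatibility axiom to $\langle[\w,\e_1],\e_i\rangle$ and rewriting these pairings as $\langle\w,[\e_1,\e_i]\rangle$, they vanish under (iii), reducing the condition to the eigenvector equation $\Phi[\w,\e_1]=\sqrt{-1}[\w,\e_1]=[\Phi\w,\e_1]$; the parallel analysis of $[\w,\e_2+\sqrt{-1}dt]$, combined with $\C$-linear extension using $L\oplus\overline{L}=E^{\bot}\otimes\C$, yields (ii).

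The fourth family, $[\Gamma(L),\Gamma(L)]\subseteq\Gamma(L_J)$, is again a condition inside $\T M\otimes\C$, so it reduces to $L$ being involutive. Substituting $\Phi\w_i=\sqrt{-1}\w_i$ into the expression in (i) applied to $\w_1,\w_2\in\Gamma(L)$ collapses it to $\Phi[\w_1,\w_2]=\sqrt{-1}[\w_1,\w_2]$; any such eigenvector must lie in $E^{\bot}\otimes\C$ by the identity $\Phi^2\x=-\x+2\langle\x,\e_1\rangle\e_2+2\langle\x,\e_2\rangle\e_1$, and hence in $L$, forcing involutivity. Conversely, $L$-involutivity recovers (i) on $L\times L$ by the same computation, on $\overline{L}\times\overline{L}$ by conjugation, and on mixed pairs $L\times\overline{L}$ by direct cancellation; since the expression in (i) is $C^{\infty}$-bilinear and $E^{\bot}\otimes\C=L\oplus\overline{L}$, this suffices to recover (i) on all sections of $E^{\bot}$. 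The main obstacle will be the bookkeeping in establishing (ii): one must decompose $[\w,\e_i]$ along $E^{\bot}\oplus E$, express the $E$-components via the compatibility axiom in terms of $[\e_1,\e_2]$, and verify that condition (iii) neutralizes these components so that the residual information is exactly $\Phi[\x,\e_i]=[\Phi\x,\e_i]$ and nothing more.
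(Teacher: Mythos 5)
Your proposal is correct and arrives at the same three conditions through the dual formulation of integrability: you test involutivity of the $\sqrt{-1}$-eigenbundle $L_J=L\oplus\C(\e_1+\sqrt{-1}\partial_t)\oplus\C(\e_2+\sqrt{-1}dt)$, whereas the paper works with the Nijenhuis tensor $N$ of $J$ and reduces $N=0$ to the vanishing of $N(\e_1,\e_2)$, $N(\x,\e_i)$ and $N(\x,\y)$ for $\x,\y\in\Gamma(E^{\bot})$, using the tensoriality of $N$ together with $N(\mathbf{X},J\mathbf{Y})=-JN(\mathbf{X},\mathbf{Y})$. The two case decompositions correspond exactly ($N(\e_1,\e_2)=-[\e_1,\e_2]$ is your bracket of the two auxiliary generators; $N(\x,\e_i)=0$ is $[\Gamma(L),\e_i]\subseteq\Gamma(L)$; $N(\x,\y)=0$ is involutivity of $L$), and the logical order is the same: condition (iii) is extracted first and is what forces $[\x,\e_i]$ into $\Gamma(E^{\bot})$, after which (ii) and then (i) become readable. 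What your route buys is that the reduction to a generating set rests on the elementary fact that involutivity of an isotropic subbundle can be checked on a frame (isotropy kills the non-tensorial terms of the Dorfman bracket), and the reality argument $[\e_1,\e_2]\in L_J\cap\overline{L_J}=0$ for (iii) is particularly clean; what it costs is the bookkeeping you rightly flag at the end, namely translating between condition (ii) on real sections of $E^{\bot}$ and the eigenvalue statement $\Phi[\mathbf{w},\e_i]=\sqrt{-1}[\mathbf{w},\e_i]$ on $\Gamma(L)$ via $E^{\bot}\otimes\C=L\oplus\overline{L}$ and conjugation, a step the paper's real formulation bypasses. Both arguments are complete modulo the same standard facts, and your outline of the delicate step (ii) is accurate, so I see no gap.
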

\begin{proof} Let $J\in {\rm Sek}_0(M)$ associated to $(\Phi,\e_1,\e_2)$ and let $N$ denote the Nijenhuis operator of $J$, i.e.\
\[
N(\mathbf{X},\mathbf{Y})=[J\mathbf{X},J\mathbf{Y}]-[\mathbf{X},\mathbf{Y}]-J([J\mathbf{X},\mathbf{Y}]+[\mathbf{X},J\mathbf{Y}])
\]
for all $\mathbf X,\mathbf Y\in \Gamma ({\mathcal E}'(M))$. Notice that $N$ is defines a skew-symmetric bundle map $N: {\mathcal E}'(M)\otimes {\mathcal E}'(M)\rightarrow {\mathcal E}'(M)$ and that $N(\mathbf{X},J\mathbf{Y})=-J(N(\mathbf{X},\mathbf{Y}))$. Then $N=0$ if and only if the following conditions are satisfied:
$$N(\x,\y)=0,\quad N(\x,\e_1)=0,\quad N(\x,\e_2)=0,\quad N(\e_1, \e_2)=0$$
 for any $\mathbf{x},\mathbf{y}\in \Gamma (E^{\bot})$.\\ Assume now $N=0$. From $0=N(\e_1,\e_2)=-[\e_1,\e_2]$ follows (iii). Moreover, from (iii) we have $[\x,\e_i]\in\Gamma(E^{\bot})$ for all $\x$. Hence
 $$0=N(\x,\e_i)=-[\x,\e_i]-\Phi[\Phi\x,\e_i]$$
 which yields (ii). Furthermore, from (ii) we have $[\x,\Phi\y]+[\Phi\x,\y]\in\Gamma(E^{\bot})$ for all $\x,\y$ and
 $$0=N(\x,\y)=[\Phi\x,\Phi\y]-[\x,\y]-\Phi([\Phi\x,\y]+[\x,\Phi\y])$$
whence (i). The proof of the reverse implication is analogous.
\end{proof}
\begin{rem}
Normality of triples is not preserved under homotheties. However, if two triples differ by a constant homothety, then one is normal if and only if the other is normal. Moreover, symmetries preserve the normality of triples, hence the normality of generalized almost contact structures.
\end{rem}
\begin{mydef}
A generalized almost contact structure $(E,L)$ is called \emph{integrable} if there exists a maximal isotropic subbundle $\ell$ of $E$ such that $L\oplus (\ell\otimes \C)$ is involutive.
Moreover, $(E,L)$ is strongly integrable if for any maximal isotropic subbundle $\ell$ of $E$, $L\oplus (\ell\otimes \C)$ is involutive.\end{mydef}
\begin{rem}Let $(E,L)$ be represented by a Poon-Wade triple $(\Phi,F,\eta)$. Then, if  $(\Phi,F,\eta)$ is a \emph{generalized contact structure} in the sense of \citep{Poon-Wade}, then $(E,L)$ is integrable. Moreover, $(E,L)$ is strongly integrable if and only if $(\Phi,F,\eta)$ is a \emph{strong generalized contact structure} in the sense of  \citep{Poon-Wade}.
\end{rem}

\begin{theorem}\label{normalinv}
Let $(E,L)$ be a generalized almost contact structure on $M$. The following are equivalent:
 \begin{itemize}
   \item [i)] $(E,L)$ is normal;
   \item [ii)] $(E,L)$ is strongly integrable and there exists an isotropic frame $\e_1,\e_2$ of $E$ and $f\in C^{\infty}(M)$ such that
   $$d\langle\e_1,\e_2\rangle=0,\quad [\e_1,\e_2]=df-2\langle\e_1, df\rangle\e_2-2\langle\e_2, df\rangle\e_1$$
   \item [iii)] $(E,L)$ is strongly integrable and for each isotropic frame $\e_1,\e_2$ of $E$ such that
   $d\langle\e_1,\e_2\rangle=0$, there exists $f\in C^{\infty}(M)$ such that
   $$[\e_1,\e_2]=df-2\langle\e_1, df\rangle\e_2-2\langle\e_2, df\rangle\e_1.$$
 \end{itemize}
\end{theorem}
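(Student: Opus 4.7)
The plan is to reduce Proposition \ref{normality} for a representing triple to a single scalar equation matching the bracket formula in (ii)--(iii). First I would unpack the three normality conditions of Proposition \ref{normality} under the assumption of strong integrability. On $L\subset E^\bot\otimes\C$, where $\Phi\x=\sqrt{-1}\x$, condition (i) is equivalent to $[L,L]\subset L$ and is frame-independent, while condition (ii) forces $[\e_i,L]\subset L$ with no $\C\e_i$-component. Strong integrability of $(E,L)$ in turn unpacks to $[L,L]\subset L$ together with the weaker $[\e_i,L]\subset L\oplus\C\e_i$. Hence under strong integrability the remaining task is to find a frame in which both the $\C\e_i$-components of $[\e_i,\cdot]$ vanish and $[\e_1,\e_2]=0$.

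The crucial observation is that these two a priori independent obstructions are coupled. I would introduce $g_i:\Gamma(L)\to C^{\infty}(M,\C)$ by $[\e_i,\x]\equiv g_i(\x)\e_i$ modulo $L$, and apply the Courant axiom $a(\x)\langle\e_1,\e_2\rangle=\langle[\x,\e_1],\e_2\rangle+\langle\e_1,[\x,\e_2]\rangle$ with the constant $\langle\e_1,\e_2\rangle=1/2$ to obtain $g_1+g_2=0$. A second application of the same axiom with $\x$ replaced by $\e_1$ and $\z\in\Gamma(E^\bot)$ in the last slot gives $\langle[\e_1,\e_2],\z\rangle=-g_1(\z)/2$, and since constancy of $\langle\e_i,\e_j\rangle$ already forces $[\e_1,\e_2]\in\Gamma(E^\bot)$, this identifies $[\e_1,\e_2]=-\xi$, where $\xi\in\Gamma(E^\bot)$ is the vector dual to $g_1$ under the nondegenerate pairing on $E^\bot$.

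The next step is to track both objects under the ${\rm O}(E)$-rescaling $(\e_1,\e_2)\mapsto(e^h\e_1,e^{-h}\e_2)$. A direct Leibniz calculation yields $[\tilde\e_1,\tilde\e_2]=[\e_1,\e_2]+dh-2\langle\e_1,dh\rangle\e_2-2\langle\e_2,dh\rangle\e_1$ and $g_i^{\rm new}(\x)=g_i(\x)\mp a(\x)(h)$. By the coupling from the previous paragraph, the two obstructions collapse into the single equation $(dh)_{E^\bot}=-[\e_1,\e_2]$ for the function $h$. Setting $f=-h$, this is exactly $[\e_1,\e_2]=df-2\langle\e_1,df\rangle\e_2-2\langle\e_2,df\rangle\e_1$, proving (i)$\Leftrightarrow$(ii). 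For (ii)$\Leftrightarrow$(iii), the same transformation rule shows that if the formula holds for some $f$ in one frame then it holds with $f+h$ in the rescaled frame; since the isotropic frames with $d\langle\e_1,\e_2\rangle=0$ form a single ${\rm O}(E)$-orbit up to a constant rescaling of the pairing (which is absorbed in the integration constant for $f$), existence in one such frame is equivalent to existence in every such frame.

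The main obstacle I anticipate is precisely the identification $[\e_1,\e_2]=-\xi$ in the second paragraph: it is what turns two independent defects into one, and lets a single scalar $h$ cure both simultaneously. The rest of the argument is routine manipulation with the Leibniz rule and the Courant axioms.
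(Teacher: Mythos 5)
Your argument is correct and follows essentially the same route as the paper: both reduce normality to the three conditions of Proposition \ref{normality} and cure the two obstructions $[\e_1,\e_2]\neq 0$ and $[\e_i,L]\not\subset L$ simultaneously by the rescaling $(\e_1,\e_2)\mapsto (e^{-f}\e_1,e^{f}\e_2)$. Your explicit coupling identity $\langle[\e_1,\e_2],\z\rangle=-\tfrac{1}{2}g_1(\z)$ is precisely what the paper uses implicitly when it deduces $[\z,\e_i']\in\Gamma(E^{\bot})$ from $[\e_1',\e_2']=0$ in the step (iii)$\Rightarrow$(i); the only points to tidy are that $g_1$, defined on $\Gamma(L)$, must be extended by conjugation to all of $E^{\bot}\otimes\C$ before invoking nondegeneracy of the pairing on $E^{\bot}$, and that the swap component of ${\rm O}(E)$ is handled by the one-line observation $[\e_2,\e_1]=-[\e_1,\e_2]$.
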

\begin{proof}(i) implies (ii) Suppose $(E,L)$ normal, and let $(\Phi,\e_1,\e_2)$ be a normal triple that represents $(E,L)$. Let $\x\in\Gamma(E^{\bot})$, $f\in C^{\infty}(M)$. Then
\begin{align}[\x-\sqrt{-1}\Phi(\x),f\e_i]&=a\left(\x-\sqrt{-1}\Phi(\x)\right)(f)\e_i+f\left([\x,\e_i]-\sqrt{-1}\Phi[\x,\e_i]\right).\nonumber\end{align}
Since $[\x,\e_i]\in\Gamma(E^{\bot})$, we obtain $[\Gamma(L),\Gamma(\C\e_i)]\subset\Gamma(L\oplus \C\e_i)$. Also, $[\Gamma(\C\e_i),\Gamma(\C\e_i)]\subset \Gamma(\C\e_i)$ because $\C\e_i$ is a isotropic line bundle. Moreover, for any $\x,\y\in \Gamma(E^{\bot})$,
\begin{align}[\x-\sqrt{-1}\Phi(\x), \y-\sqrt{-1}\Phi(\y)]&=-\Phi\left([\x,\Phi(\y)]+[\Phi(\x),\y]-\sqrt{-1}\Phi\left([\x,\Phi(\y)]+[\Phi(\x),\y]\right)\right),\nonumber
\end{align}
which shows $[\Gamma(L),\Gamma(L)]\subset\Gamma(L)$, whence $(E,L)$ is strongly integrable. Since $[\e_1,\e_2]=0$ we obtain (ii).\\\\
(ii) implies (iii): let $\e_1,\e_2$ as in (ii) and let $\e_1',\e_2'$ be a isotropic frame of $E$ with $d\langle\e_1',\e_2'\rangle=0$. Then there exist two functions $\lambda,\mu$ such that
$\lambda\mu$ is a nonzero constant, and either
\begin{align}(\e'_1,\e'_2)=(\lambda\e_1,\mu\e_2)\quad\mbox{ or }\quad(\e'_1,\e'_2)=(\lambda\e_2,\mu\e_1).\nonumber\end{align}
Moreover, we can assume the former condition up to symmetry. Then,
\begin{align}\label{lambdamu3}[\lambda\e_1,\mu\e_2]&=dF-2\langle\e_1, dF\rangle\e_2-2\langle\e_2, dF\rangle\e_1,\end{align}
where $F=\mu\lambda(f+\ln|\lambda|)$.\\
(iii) implies (i): Let $(E,L)$ be strongly integrable, and let $\e_1,\e_2$ be an isotropic frame of $E$ normalized such that $\langle\e_1,\e_2\rangle=1/2$. Then,
$$[\e_1,\e_2]=df-2\langle\e_1, df\rangle\e_2-2\langle\e_2, df\rangle\e_1$$
for some function $f$. Then, passing to the frame $\e_1'=\exp(-f)\e_1$, $\e_2'=\exp(f)\e_2$ we obtain $[\e_1',\e_2']=0$ from the identities (\ref{lambdamu3}).
Finally, for all $\x\in \Gamma(E^{\bot})$
\begin{align}\label{Lsection}[\x-\sqrt{-1}\Phi(\x),\e'_i]&=[\x,\e'_i]-\sqrt{-1}[\Phi(\x),\e'_i].\end{align}
Since $[\e'_1,\e'_2]=0$ and $L\oplus\C\e'_i$ is involutive, the section (\ref{Lsection}) is in $\Gamma(E^{\bot}\cap (L\oplus\C\e'_i))=\Gamma(L)$, whence
$[\Phi(\x),\e'_i]=\Phi[\x,\e'_i]$. In conclusion, the involutivity of $L$ implies the first condition in Proposition \ref{normality} and (i).
\end{proof}

\begin{prop}
An almost contact structure is normal in generalized sense if and only if it is an ordinary normal almost contact structure.
\end{prop}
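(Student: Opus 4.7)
My plan is to lift the generalized almost contact structure to the cone $C(M)$ and reduce the question to the integrability of an ordinary almost complex structure. Let $(\phi,\xi,\eta)$ be an almost contact structure, and let $(\Phi,\e_1,\e_2)$ be the canonical Poon-Wade triple from Example \ref{examplesoftriples}(a), with $\e_1=\xi$, $\e_2=\eta$, and $\Phi=\left[\begin{array}{cc}\phi&0\\0&-\phi^*\end{array}\right]$. By Proposition \ref{Bijection-Sekiya} applied with $\lambda=0$, the associated element $J\in{\rm Sek}_0(M)$ acts on $\T C(M)$ by $J|_{E^{\bot}}=\Phi$, $J(\xi)=-\partial_t$, $J(\partial_t)=\xi$, $J(\eta)=-dt$, and $J(dt)=\eta$. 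By definition, $(\phi,\xi,\eta)$ is normal in the generalized sense if and only if this $J$ is integrable on $\T C(M)$.

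The key observation I would make next is that $J$ preserves the tangent/cotangent splitting of $\T C(M)$: indeed $\xi,-\partial_t\in TC(M)$, $\eta,-dt\in T^*C(M)$, and $\Phi$ is itself block-diagonal with respect to the splitting $\T M=TM\oplus T^*M$. Thus $J=\left[\begin{array}{cc}\tilde\phi&0\\0&-\tilde\phi^*\end{array}\right]$, and the axioms $J^2=-{\rm Id}$, $J^*=-J$ then force $\tilde\phi^2=-{\rm Id}$, so that $\tilde\phi$ is an ordinary almost complex structure on $C(M)$ that extends $\phi|_{\ker\eta}$ and swaps $\xi\leftrightarrow -\partial_t$. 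A standard lemma in generalized geometry (the $+\sqrt{-1}$-eigenbundle of such a block-diagonal $J$ is $T^{1,0}C(M)\oplus T^{*(0,1)}C(M)$, whose Dorfman-involutivity is equivalent to the Lie-involutivity of $T^{1,0}C(M)$) then identifies the integrability of $J$ with the classical integrability of $\tilde\phi$.

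The proof concludes by invoking the classical theorem of Sasaki and Hatakeyama, which identifies the integrability of the natural almost complex structure $\tilde\phi$ on the cone of an almost contact manifold with the vanishing of $N_\phi+2d\eta\otimes\xi$, i.e., with classical normality of $(\phi,\xi,\eta)$. The only substantive step in executing this plan is verifying the block-diagonal form of $J$ and that the resulting $\tilde\phi$ squares to $-{\rm Id}$, both of which follow routinely from the defining axioms of a Poon-Wade triple once the explicit formulas of Proposition \ref{Bijection-Sekiya} at $\lambda=0$ are in hand; the remaining implications are standard. An alternative path, avoiding the cone, would apply Proposition \ref{normality} directly: condition (iii) translates to $\mathcal{L}_\xi\eta=0$, condition (ii) to $\mathcal{L}_\xi\phi=0$ together with a $\phi$-anti-invariance of $d\eta$, and condition (i) to the tangential Nijenhuis identity, whose conjunction is classically equivalent to $N_\phi+2d\eta\otimes\xi=0$.
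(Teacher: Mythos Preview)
Your proposal is correct and follows essentially the same route as the paper: both observe that the associated $J\in{\rm Sek}_0(M)$ is block-diagonal with respect to the tangent/cotangent splitting of $\T C(M)$, extract the resulting ordinary almost complex structure $j$ on $TC(M)$ (the paper writes it as $j(X)=\phi(X)-\eta(X)\partial_t$, $j(\partial_t)=\xi$), and then identify integrability of $J$ with integrability of $j$, which is the classical definition of normality. You are simply more explicit than the paper in naming the Sasaki--Hatakeyama characterization and in spelling out why block-diagonal generalized complex structures reduce to ordinary ones; the paper compresses this last step into the phrase ``i.e.\ $(\phi,\xi,\eta)$ is a normal almost contact structure.''
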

\begin{proof} Let $(\Phi,\e_1,\e_2)$ be the triple associated to the almost contact structure $(\phi,\xi,\eta)$.
With respect to the decomposition ${\mathcal E}'(M)=(TM\oplus \R\partial_t)\oplus (T^*M\oplus \R dt)$, the generalized almost complex structure $J$ associated to $(\Phi,\e_1,\e_2)$ is of the form
$$J=\left[\begin{array}{cc} j &0\\0&-j^*\end{array}\right]\,,$$
where $j:TM\oplus \R\partial_t\rightarrow TM\oplus \R\partial_t$ is the almost complex structure such that
\[
j(X)=\phi(X)-\eta(X)\partial_t,\quad j(\partial_t)=\xi\,.
\]
We conclude that $(\Phi,\e_1,\e_2)$ is normal if and only if $j$ is a complex structure i.e.\ $(\phi,\xi,\eta)$ is a normal almost contact structure.
\end{proof}

\begin{prop}
An almost cosymplectic structure $(\theta,\eta)$ is normal in generalized sense if and only if $d\theta=0=d\eta$.
\end{prop}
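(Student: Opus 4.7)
The plan is to apply Theorem \ref{normalinv} to the Poon-Wade triple $(\Phi,\xi,\eta)$ of Example \ref{examplesoftriples}(b), with $\e_1=\xi$ and $\e_2=\eta$. Since $\langle\xi,\eta\rangle=\eta(\xi)/2=1/2$ is constant, the normalization condition $d\langle\e_1,\e_2\rangle=0$ is automatic, and the Dorfman bracket (using $i_\xi\eta=1$) gives $[\xi,\eta]=\mathcal{L}_\xi\eta=i_\xi d\eta$. A short computation identifies the $\sqrt{-1}$-eigenbundle of $\Phi$ as
\[
L=\{X-\sqrt{-1}\,i_X\theta \,:\, X\in {\rm Ker}(\eta)\otimes\C\}\,,
\]
and since the only maximal isotropic line subbundles of $E$ are $\C\xi$ and $\C\eta$, strong integrability reduces to checking separately that $L$, $L\oplus\C\xi$, and $L\oplus\C\eta$ are all involutive.

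For the ``if'' direction, assume $d\theta=0=d\eta$. Then $[\xi,\eta]=0$ and the bracket condition in Theorem \ref{normalinv}(ii) holds with $f=0$. A Cartan calculus computation yields
\[
[X-\sqrt{-1}i_X\theta,\, Y-\sqrt{-1}i_Y\theta] = [X,Y] - \sqrt{-1}\,i_{[X,Y]}\theta - \sqrt{-1}\,i_Y i_X d\theta
\]
for $X,Y\in {\rm Ker}(\eta)\otimes\C$. Since $d\eta=0$ forces $[X,Y]\in {\rm Ker}(\eta)$ and $d\theta=0$ kills the last term, the bracket lies in $L$. The mixed brackets with $\xi$ and $\eta$ close similarly after using $\mathcal{L}_\xi\theta=i_\xi d\theta=0$.

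Conversely, assume $(E,L)$ is normal. Condition (iii) of Proposition \ref{normality} immediately gives $i_\xi d\eta=0$. Involutivity of $L\oplus\C\eta$ forces $[X-\sqrt{-1}i_X\theta,\eta]=i_X d\eta$ to lie in the $T^*M$-component of $L\oplus\C\eta$; since this component equals $\C\eta$ and $i_X d\eta\in{\rm Ann}(\xi)$ (thanks to $i_\xi d\eta=0$), we deduce $i_X d\eta=0$ for all $X\in{\rm Ker}(\eta)$, hence $d\eta=0$. Involutivity of $L\oplus\C\xi$ combined with the identity $i_\xi d(i_X\theta)=-i_{[X,\xi]}\theta+i_X\mathcal{L}_\xi\theta$ forces $i_X\mathcal{L}_\xi\theta=0$ for all $X\in{\rm Ker}(\eta)$, whence $\mathcal{L}_\xi\theta=i_\xi d\theta=0$. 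Finally, involutivity of $L$ itself forces $i_Y i_X d\theta=0$ for $X,Y\in{\rm Ker}(\eta)$, and combined with $i_\xi d\theta=0$ this implies $d\theta=0$.

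The main obstacle is the careful bookkeeping of the several Dorfman brackets and the identification of the ``error 1-forms'' they produce; the key structural input is that the purely cotangent sections of $L$, $L\oplus\C\xi$, and $L\oplus\C\eta$ form $\{0\}$, $\{0\}$, and $\C\eta$ respectively, which is exactly what pins down the vanishing of $d\eta$ and $d\theta$.
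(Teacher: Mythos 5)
Your route is genuinely different from the paper's. The paper never touches the Dorfman bracket on $M$ directly: it passes to the cone, observes that the generalized almost complex structure associated to the (swapped) triple $(\Phi,\e_2,\e_1)$ is of symplectic type $J_\omega$ for the nondegenerate $2$-form $\omega=\theta-\eta\wedge dt$ on ${\mathcal E}'(M)$, and then invokes the standard fact that $J_\omega$ is integrable iff $d\omega=d\theta-d\eta\wedge dt=0$. You instead verify the conditions of Proposition \ref{normality} / Theorem \ref{normalinv} by hand on $M$. Your computations are correct: the identification of $L$ as $\{X-\sqrt{-1}i_X\theta: X\in{\rm Ker}(\eta)\otimes\C\}$, the bracket formula $[X-\sqrt{-1}i_X\theta,Y-\sqrt{-1}i_Y\theta]=[X,Y]-\sqrt{-1}i_{[X,Y]}\theta-\sqrt{-1}i_Yi_Xd\theta$, and the Cartan-calculus identities all check out, and the ``if'' direction is complete. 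What the paper's approach buys is brevity and a conceptual explanation (cosymplectic $=$ symplectic type on the cone); what yours buys is an explicit record of exactly which component of $d\theta$ and $d\eta$ each involutivity condition kills.

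There is, however, one genuine gap in your converse, and it sits at the step ``Condition (iii) of Proposition \ref{normality} immediately gives $i_\xi d\eta=0$.'' Proposition \ref{normality} is a statement about a \emph{triple} being normal, whereas you assume that the \emph{structure} $(E,L)$ is normal; these are not the same hypothesis. Normality of $(E,L)$ only guarantees that some triple in the ${\rm O}(E)$-orbit of $(\Phi,\xi,\eta)$ is normal, and via Theorem \ref{normalinv}(iii) what you actually get for the frame $(\xi,\eta)$ is $[\xi,\eta]=i_\xi d\eta=df-\xi(f)\,\eta$ for some $f\in C^\infty(M)$, not $i_\xi d\eta=0$. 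This is not a removable technicality: if $\eta_0$ is closed and $f$ is non-constant, then $(\theta,e^{-f}\eta_0)$ and $(\theta,\eta_0)$ determine the \emph{same} pair $(E,L)$ (same $E$, same ${\rm Ker}(\eta)$, ${\rm Ann}(\xi)$ and $\phi$, hence same $L$), yet only the second has closed $1$-form; so the statement is false if ``normal'' is read at the level of $(E,L)$, and your derivation of $d\eta=0$ would break down. The proposition — and the paper's proof — should be read as asserting that the \emph{triple} $(\Phi,\xi,\eta)$ canonically associated to $(\theta,\eta)$ is normal iff $d\theta=0=d\eta$. Once you adopt that reading, Proposition \ref{normality} applies verbatim to $(\Phi,\xi,\eta)$, its condition (iii) really does give $[\xi,\eta]=i_\xi d\eta=0$, and the rest of your argument closes correctly.
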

\begin{proof}Let $(\Phi,\e_1,\e_2)$ be the triple associated to the almost contact structure $(\theta,\eta)$. Consider now the triple $(\Phi,\e_2,\e_1)$. With respect to the decomposition ${\mathcal E}'(M)=(TM\oplus \R\partial_t)\oplus (T^*M\oplus \R dt)$, the generalized almost complex structure $J$ associated to $(\Phi,\e_2,\e_1)$ is of the form
$$J=\left[\begin{array}{cc}0 &\omega\\ -\omega^{-1} & 0\end{array}\right]\,,$$
where $\omega:TM\oplus \R\partial_t\rightarrow T^*M\oplus \R dt$ is the nondegenerate 2-form such that
$$\omega(X)=i_X\theta-\eta(X)dt,\quad \omega(\partial_t)=\eta\,.$$
In conclusion, the triple $(\Phi,\e_1,\e_2)$ is normal if and only if $(\Phi,\e_2,\e_1)$ is normal, if and only if $d\omega=0$, which means
$$d\theta-d\eta\wedge dt=0\,.$$
\end{proof}

\begin{rem}In \citep{Iglesias-Wade}, a different bracket $[\ ,\ ]_1$ is introduced on  ${\mathcal E}'(M)$, which satisfies the axioms of \emph{Courant-Jacobi algebroids} \citep{Grabowsky-Marmo}. This bracket is a nontrivial extension of the Dorfman bracket on $\mathbb{T}M$ such that
$$[\alpha,X]_1=[\alpha,X]+\alpha(X)dt,\quad [\partial_t,\alpha]_1=\alpha,\quad [\partial_t,dt]_1=dt,$$
for all $X\in \Gamma(TM), \ \alpha\in \Gamma(T^*M).$
With respect to the bracket $[\ ,\ ]_1$,
\begin{itemize}
  \item an almost contact structure is normal in generalized sense if and only if it is an ordinary normal almost contact structure, and
  \item an almost cosymplectic structure $(\theta,\eta)$ is normal in generalized sense if and only if $\theta=d\eta$ (i.e., if and only if $(\theta,\eta)$ is a contact structure).
\end{itemize}
\end{rem}
It would be interesting to find a framework that includes both brackets $[\ ,\ ]_0$ and $[\ ,\ ]_1$ on ${\mathcal E}'(M)$.

\begin{prop}\label{involutivity_sphere}
Let $(E,L)$ be the generalized almost contact structure on $S^3$ associated to the functions $f,g$, defined in Example \ref{Sphere}.  Then\\
i) $(E,L)$ is integrable; \\\\
ii) $(E,L)$ is strongly integrable if and only if $L$ is involutive, if and only if
  \begin{eqnarray}\label{eqsphere1}V_2(g)+V_3(f)=0,& \quad &V_2(f)-V_3(g)=0;\end{eqnarray}\ \\
iii) the triple $(E,L)$ is normal if and only if the equations (\ref{eqsphere1}) hold and in addition
\begin{eqnarray}\label{eqsphere2}V_1(g)+2f=0,& \quad &V_1(f)-2g=0\, .\end{eqnarray}
\end{prop}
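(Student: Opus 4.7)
The plan is to reduce each of the three claims to a small collection of explicit Dorfman bracket computations on $S^3$, made possible by first writing down a concrete frame for $L$ and then invoking the $\mathfrak{su}(2)$-structure of $V_1,V_2,V_3$. From the identification $L_J=L\oplus \C(\e_1+\sqrt{-1}\partial_t)\oplus \C(\e_2+\sqrt{-1}dt)$ used in the previous proposition, together with the explicit forms of $j$ and $\Lambda$, one verifies that $L$ is spanned over $\C$ by
\[
\mathbf{u}_1:=V_2-\sqrt{-1}V_3,\qquad \mathbf{u}_2:=-(f-\sqrt{-1}g)V_1+\nu_2-\sqrt{-1}\nu_3.
\]
The only structural inputs needed are the Lie brackets $[V_i,V_j]=2\epsilon_{ijk}V_k$ and their Maurer-Cartan consequence $\mathcal{L}_{V_i}\nu_j=2\epsilon_{ijk}\nu_k$; with these, every relevant Dorfman bracket reduces to the Leibniz rule.

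For (i), I would compute $[\mathbf{u}_1,\e_1]=2\sqrt{-1}\mathbf{u}_1$, then $[\mathbf{u}_2,\e_1]$ as a combination of $\mathbf{u}_2$ and $\e_1$ only, and finally
\[
[\mathbf{u}_1,\mathbf{u}_2]=2\sqrt{-1}(f-\sqrt{-1}g)\mathbf{u}_1+\bigl((V_2(f)-V_3(g))-\sqrt{-1}(V_2(g)+V_3(f))\bigr)\e_1.
\]
All three brackets lie in $L\oplus\C\e_1$ for any $f,g$, so that subbundle is always involutive and $(E,L)$ is integrable. For (ii), the only maximal isotropic lines in $E$ are $\R\e_1$ and $\R\e_2$, so strong integrability amounts to additionally asking involutivity of $L\oplus\C\e_2$. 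Matching tangent and cotangent components of $[\mathbf{u}_1,\e_2]$ against a combination in $\mathbf{u}_1,\mathbf{u}_2,\e_2$ forces exactly the equations \eqref{eqsphere1}. Under these conditions, the $\e_1$-term of $[\mathbf{u}_1,\mathbf{u}_2]$ vanishes (so $L$ itself becomes involutive) and the $\nu_2$- and $\nu_3$-coefficients of $[\mathbf{u}_2,\e_2]$ also become mutually consistent, placing that bracket in $L\oplus\C\e_2$. This yields the triple equivalence in (ii).

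For (iii), I would apply Theorem \ref{normalinv}(iii) to the frame $(\e_1,\e_2)$, which qualifies because $d\langle\e_1,\e_2\rangle=d(1/2)=0$. Normality is then equivalent to strong integrability together with the existence of $F\in C^\infty(S^3)$ such that
\[
[\e_1,\e_2]=dF-2\langle\e_1,dF\rangle\e_2-2\langle\e_2,dF\rangle\e_1.
\]
A direct calculation yields $[\e_1,\e_2]=(V_1(f)-2g)V_2+(V_1(g)+2f)V_3$, which is purely tangent with no $V_1$ component. Matching the $\nu_2$- and $\nu_3$-components of the two sides forces $V_2(F)=V_3(F)=0$; combined with $[V_2,V_3]=2V_1$ and the Jacobi identity, this gives $V_1(F)=0$, so $F$ must be constant on the connected manifold $S^3$. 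The surviving $V_2$- and $V_3$-components of the defining identity then collapse to precisely \eqref{eqsphere2}. The main obstacle is organizational rather than conceptual: one has to track tangent versus cotangent pieces through roughly half a dozen Dorfman brackets before the equations \eqref{eqsphere1} and \eqref{eqsphere2} emerge in a transparent fashion; once the frame $(\mathbf{u}_1,\mathbf{u}_2)$ for $L$ and the $\mathfrak{su}(2)$-identities are in place, each remaining step is a short routine computation.
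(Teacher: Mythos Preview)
Your proposal is correct and follows essentially the same route as the paper: you pick the same frame for $L$ (your $\mathbf{u}_2$ is $-\sqrt{-1}\w$ in the paper's notation), compute the handful of Dorfman brackets using the $\mathfrak{su}(2)$ relations, and for part (iii) invoke Theorem~\ref{normalinv} together with the explicit form of $[\e_1,\e_2]$. Your argument for (iii) is slightly more detailed than the paper's (which simply asserts that the projection of an exact $1$-form onto $E^{\bot}$ equals the purely tangent vector $[\e_1,\e_2]$ only when the latter vanishes), and your observation that the only maximal isotropic line subbundles of $E$ are $\R\e_1$ and $\R\e_2$ makes explicit something the paper uses implicitly; note also that only the relation $[V_2,V_3]=2V_1$ is needed to get $V_1(F)=0$, not the Jacobi identity.
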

\begin{proof} With the the notation of Example \ref{Sphere}, let $\e_1=-V_1$, $\e_2=-\nu_1-fV_2-gV_3$.
Note that $E^{\bot}$ is trivialized by
$$V_2,\quad V_3,\quad \nu_3-gV_1, \quad -\nu_2+fV_1\, .$$
Moreover, $L$ is trivialized by
$$\z:=V_2-\sqrt{-1}V_3,\quad \w:=\nu_3-gV_1-\sqrt{-1}(-\nu_2+fV_1)\, .$$
Also recall that
\begin{eqnarray}\label{commutator}
[V_i,V_j]=\sum_k2\epsilon_{ij}^kV_k,&\quad& [V_i,\nu_j]=\sum_k2\epsilon_{ij}^k\nu_k.\end{eqnarray}

Then, the identities
\begin{eqnarray}
\left[\e_1, \z\right]&=&-2\sqrt{-1}\z\,,\\
\left[\e_1, \w\right]&=&-2\sqrt{-1}\w+\left(2f-2\sqrt{-1}g+V_1(g)-\sqrt{-1}V_1(f)\right)\e_1\,,\\
\left[\z, \w\right]&=&2\left(-f+\sqrt{-1}g\right)\z+\nonumber\\&&+\left((V_2(g)+V_3(f))+{\sqrt{-1}}(V_2(f)-V_3(g))\right)\e_1
\end{eqnarray}
prove (i) and that $L$ is involutive iff $[\Gamma(L),\Gamma(L)]\subset \Gamma(E^{\bot}\otimes\C)$, iff equations (\ref{eqsphere1}) are satisfied. Moreover, from
\begin{eqnarray}
\left[\e_2, \z\right]&=&-2\w+\left(V_2(f)-\sqrt{-1}V_3(f)\right)\z+\nonumber\\ &&+\left(V_2(g)+V_3(f)+\sqrt{-1}V_2(f)-\sqrt{-1}V_3(g)\right) V_3 \\
\left[\e_2, \w \right]&=&\left(V_1(g)+2f+\sqrt{-1}(V_1(f)-2g)\right)\e_2-\left(V_3(g)+\sqrt{-1}V_3(f)\right)\w+\nonumber\\
       &&+\left(-gV_1(f)+2g^2+fV_1(g)+2f^2\right)\z+\nonumber\\
       &&+\left(-fV_2(g)-fV_3(f)-\sqrt{-1}fV_2(f)+\sqrt{-1}fV_3(g)\right)\e_1+\nonumber\\
       \label{eqncoeffs}&&-\left(V_2(g)+V_3(f)+\sqrt{-1}(V_2(f)-V_3(g))\right)\nu_2
\end{eqnarray}
we conclude that  $L\oplus\C\e_2$ is involutive if and only if equations (\ref{eqsphere1}) are satisfied, which concludes the proof of (ii). To prove (iii), notice that
\[
[\e_1,\e_2]=[V_1, \nu_1+fV_2+gV_3]=V_1(f)V_2+V_1(g)V_3+2fV_3-2gV_2\,,
\]
which is the projection onto $E^{\bot}$ of an exact 1-form if and only if it is equal to $0$, if and only if equations (\ref{eqsphere2}) hold.
\end{proof}

\begin{examples}\ \\\begin{itemize}
\item[(a)] Let $h(z,w)$ be a holomorphic function in a neighborhood of $S^3\subset\C^2$, where $\C^2$ is identified with $\R^4$ with complex coordinates $z=x_1+\sqrt{-1}x_2$, $w=x_3+\sqrt{-1}x_4$. Then, $f={\rm Re}(h)$, $g={\rm Im}(h)$ satisfy the equations (\ref{eqsphere1}) and define a strongly integrable generalized almost contact structure $(E,L)$ on $S^3$, whose type jumps along $$S^3\cap\{h(z,w)=0\}.$$
Moreover, $(E,L)$ is normal if and only if $$\partial_t(g)=2g,\quad \partial_t(f)=2f.$$
\item[(b)] In particular if $h(z,w)$ is a homogeneous polynomial then the associated generalized almost contact structure is strongly integrable, its type jumps along a link and it is normal if and only if either $h\equiv 0$ of ${\rm deg}(h)=2$.
\end{itemize}
\end{examples}
\begin{rem}
Let $H$ be a closed three-form on $M$. As in section \ref{Cone}, the twisted Dorfman bracket $[\ ,\ ]_H$ induces a bracket on ${\mathcal E}'(M)$and corresponding notions of $H$-\emph{normal triples} and (\emph{strongly}) $H$-\emph{integrable} generalized almost contact structures. It is easy to see that Proposition \ref{normality} and Corollary \ref{normalinv} extend to the twisted setting.\end{rem}
\begin{example}Consider the generalized almost contact structure on $S^3$ as in Example \ref{Sphere} and set $H=c\nu_1\wedge\nu_2\wedge\nu_3$. Then
\begin{itemize}
  \item $(E,L)$ is always $H$-integrable;
  \item $(E,L)$ is strongly $H$-integrable if and only if
$$V_2(g)+V_3(f)=c(g^2-f^2),\quad V_2(f)-V_3(g)=2cfg;$$
  \item $(E,L)$ is $H$-normal if and only if  $c=0$ or $f=0=g$.
\end{itemize}
\end{example}

\section{Mixed pairs}

Recall that given a manifold $M$ of real dimension $m$, the Clifford algebra of $(\T M\otimes \mathbb C,\langle\,,\,\rangle)$ acts through its standard representations on differential forms $\Omega^\bullet(M) = \bigwedge^\bullet T^* M\otimes \C$. To avoid cluttering the notation, we simply write $\rho_1\rho_2$ for the wedge product $\rho_1\wedge\rho_2$ in the DGA $(\Omega^\bullet(M),d)$. A differential form $\rho\in \Omega^\bullet(M)$  is a {\it pure spinor} if its annihilator ${\rm Ann}(\rho)\subseteq \T M$ is a maximal isotropic subbundle. Let ${\rm pr}^{m}$ be the projection of $\Omega^\bullet(M)$ onto $\Omega^m(M)$. The $\Omega^{{\rm top}}(M)$-valued bilinear pairing $\mu_M$ on $\Omega^\bullet(M)$ defined by $$\mu_M(\rho_1,\rho_2)=(-1)^{{m\choose 2}} {\rm pr}^m(\rho_1\rho_2)$$ for all $\rho_1,\rho_2\in \Omega(M)$.  The {\it type} of a pure spinor $\rho\in \Omega^\bullet (M)$ is the function ${\rm type}(\rho)$ whose value at $x\in M$ is the minimum of the degrees of the components of $\rho(x)\in T^*_x M$. We refer the reader to \citep{Gualtieri} for a systematic treatment of pure spinors and their applications to generalized complex geometry.

Assume now that $m=2n+1$.
Let $(E,L)$ be a generalized almost contact structure represented by the triple $(\Phi,\e_1,\e_2)$. Then $L\oplus\C\e_1$ and $L\oplus\C \e_2$ are maximally isotropic subbundles of $\T M\otimes \C$. Therefore, at any point $x\in M$ there exist $\rho_1,\rho_2\in \Omega^\bullet(M)$ such that
\[
L_x\oplus\C\e_1={\rm Ann}(\rho_1)\,;\quad L_x\oplus\C\e_2={\rm Ann}(\rho_2)\,.
\]
In particular, this implies $\mu_M(\rho_1,\bar \rho_2)\neq 0$.
\begin{mydef}
  A pair of pure spinors $(\rho_1,\rho_2)\in \Omega^\bullet(M)\times \Omega^\bullet(M)$ is called {\it non-degenerate} if $\mu_M(\rho_1,\bar \rho_2)\neq 0$. We call a non-degenerate pair $(\rho_1,\rho_2)$ of pure spinors a {\it mixed pair} if there exist isotropic sections $\e_1,\e_2$ of $\T M$ such that $\rho_1 = \e_1 \cdot \rho_2$ and $\rho_2=\e_2\cdot \rho_1$.
\end{mydef}
\begin{rem}
Let  $(\rho_1,\rho_2)$ be a mixed pair and and let $\e_1,\e_2$ be isotropic sections as above. Then, the bundle $E$ generated by $\e_1,\e_2$ does not depend on the choice of such sections. Moreover, if we set
\[
L={\rm Ann}(\rho_1)\cap {\rm Ann}(\rho_2)\,,
\]
then $(E,L)$ is a generalized almost contact structure.\end{rem}

\begin{mydef}Let $(M,H)$ be a pair consisting of a manifold $M$ and a real closed 3-form $H$ on $M$, and let $d_H$ denote the twisted de Rham differential $d + H\cdot$. A mixed pair $(\rho_1,\rho_2)$ is {\it integrable} if there exists $i\in\{1,2\}$ and $\bf v\in\Gamma(\T M)$ such that
\[
d_H \rho_i = {\bf v} \cdot \rho_i
\]
and {\it strongly integrable} if there exist ${\bf v}_1, {\bf v}_2\in\Gamma(\T M)$ such that  \[
d_H \rho_1 = {\bf v}_1 \cdot \rho_1, \quad d_H \rho_2 = {\bf v}_2 \cdot \rho_2.
\]
\end{mydef}

\begin{rem}
As pointed out to us by Tomasiello,  our definition of strongly integrable mixed pairs is somewhat reminiscent of the supersymmetry equations for type II supergravity solutions on ${\rm AdS}_7\times M_3$ \citep{AFRT}. It would be interesting to make this connection more precise.
\end{rem}

\begin{example}
If $(\theta,\eta)$ is an almost cosymplectic structure on $M$ and  $\rho_1=e^{\sqrt{-1}\theta}$ and $\rho_2=\rho_1\eta$, then $(\rho_1,\rho_2)$ is a mixed pair. Moreover, $(\rho_1,\rho_2)$ is integrable if $d\theta =0$ and strongly integrable if $d\theta=0=d\eta$.
\end{example}

\begin{lem}\label{Lemma}
Let $\rho_1,\rho_2\in \Omega^\bullet(M)$ be a pair of pure spinors of definite and opposite parity and let $\rho = \rho_1 + \sqrt{-1} dt\rho_2 \in \Omega^\bullet (C(M))$. Then $\mu_{C(M)}(\rho,\overline{\rho})\neq 0$ if and only if $(\rho_1,\rho_2)$ is non-degenerate.

\end{lem}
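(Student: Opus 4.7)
The plan is a direct computation of $\mu_{C(M)}(\rho,\overline\rho)$. Expanding $\rho\wedge\overline\rho$ using $\overline\rho=\overline{\rho_1}-\sqrt{-1}\,dt\,\overline{\rho_2}$ produces four terms. The ``double-$dt$'' term $dt\,\rho_2\,dt\,\overline{\rho_2}$ vanishes: since $\rho_2$ has definite parity, commuting $dt$ past $\rho_2$ produces a sign but brings the two $dt$ factors together, forcing $dt\wedge dt=0$. The term $\rho_1\wedge\overline{\rho_1}$ is a polyform on $M$ and therefore has no component of top degree $2n+2$ on $C(M)$. Hence only the two mixed terms survive under $\mathrm{pr}^{2n+2}$; using $\rho_1\wedge dt=(-1)^{p_1}dt\wedge\rho_1$ to move $dt$ to the left, the projection reduces to $dt$ wedged with a $\C$-linear combination of $\mathrm{pr}^{2n+1}(\rho_1\wedge\overline{\rho_2})$ and $\mathrm{pr}^{2n+1}(\rho_2\wedge\overline{\rho_1})$, which up to the global sign in the definition of $\mu_M$ are $\mu_M(\rho_1,\overline{\rho_2})$ and $\mu_M(\rho_2,\overline{\rho_1})$ respectively.

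Next, I would invoke the symmetry of $\mu_M$ in odd dimension. For any pure-degree forms $\alpha^{(a)},\beta^{(b)}$ with $a+b=2n+1$, one of $a,b$ is even and the other odd, so $(-1)^{ab}=1$ and $\alpha^{(a)}\wedge\beta^{(b)}=\beta^{(b)}\wedge\alpha^{(a)}$. Combining this with complex conjugation gives $\mu_M(\rho_2,\overline{\rho_1})=\overline{\mu_M(\rho_1,\overline{\rho_2})}$. Substituting back, the calculation shows that $\mu_{C(M)}(\rho,\overline\rho)$ is a nonzero complex multiple of $dt\wedge\bigl[\mu_M(\rho_1,\overline{\rho_2})\pm\overline{\mu_M(\rho_1,\overline{\rho_2})}\bigr]$, with the sign determined by the parity $p_1$ of $\rho_1$.

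The hard part will be to show that this bracketed expression vanishes precisely when $\mu_M(\rho_1,\overline{\rho_2})$ does. This hinges on a reality property of the Mukai pairing for pure spinors of definite and opposite parity: depending on $p_1$, $\mu_M(\rho_1,\overline{\rho_2})$ is automatically either a real or a purely imaginary multiple of the real volume form of $M$. Once this property is in hand, the bracketed combination reduces to a nonzero scalar multiple of $\mu_M(\rho_1,\overline{\rho_2})$ itself, from which the claimed equivalence follows. I expect this reality-and-sign bookkeeping to be the technically most demanding step of the proof.
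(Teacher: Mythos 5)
Your computation coincides with the paper's: both expand $\rho\overline{\rho}$ into four terms, discard $\rho_1\overline{\rho_1}$ (no $dt$ factor, hence no top-degree component on $C(M)$) and the double-$dt$ term, and arrive at $\mu_{C(M)}(\rho,\overline{\rho})=c\,dt\,\bigl(\mu_M(\rho_1,\overline{\rho_2})+\epsilon\,\overline{\mu_M(\rho_1,\overline{\rho_2})}\bigr)$ with $c\neq 0$ and $\epsilon=\pm1$, using the symmetry of $\mu_M$ in odd dimension exactly as you do. From this formula the implication ``$\mu_{C(M)}(\rho,\overline{\rho})\neq 0\Rightarrow(\rho_1,\rho_2)$ non-degenerate'' is immediate. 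The paper dispatches the converse with ``from which the result follows''; you are right to single it out as the actual content of the lemma.

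However, the reality property you propose to invoke is false at the stated level of generality, so your plan cannot be completed as written. If $(\rho_1,\rho_2)$ is any non-degenerate pair of pure spinors of definite and opposite parity, then so is $(\rho_1,e^{\sqrt{-1}\theta}\rho_2)$ for every real $\theta$: purity, parity and $\mu_M(\rho_1,\overline{e^{\sqrt{-1}\theta}\rho_2})=e^{-\sqrt{-1}\theta}\mu_M(\rho_1,\overline{\rho_2})\neq 0$ are all preserved, while the phase of the pairing rotates arbitrarily. Hence $z=\mu_M(\rho_1,\overline{\rho_2})$ cannot be automatically real or purely imaginary, and for suitable $\theta$ the combination $z+\epsilon\overline{z}$ vanishes even though $z\neq 0$. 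Concretely, on $M=S^1$ take $\rho_1=1$ and $\rho_2=\sqrt{-1}\,dx$: the pair is non-degenerate, yet $\rho=1-dt\,dx$ is real, so its annihilator meets its conjugate and $\mu_{C(S^1)}(\rho,\overline{\rho})=0$. What rescues the statement in every place the paper applies it is the mixed-pair hypothesis: if $\rho_1=\e_1\cdot\rho_2$ with $\e_1$ a \emph{real} isotropic section, the adjunction property of the Clifford action with respect to the Mukai pairing gives $\mu_M(\e_1\cdot\rho_2,\overline{\rho_2})=\pm\mu_M(\rho_2,\e_1\cdot\overline{\rho_2})=\pm\mu_M(\rho_2,\overline{\e_1\cdot\rho_2})$, which together with the symmetry of $\mu_M$ shows that $\overline{z}$ equals $z$ up to a universal sign --- exactly the reality you need (one must still check that this sign makes $z+\epsilon\overline{z}$ equal to $2z$ rather than $0$). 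So the correct completion of your argument must import the intertwining sections $\e_1,\e_2$; it is not a general property of opposite-parity pure spinors, and as stated both your key step and the lemma's unqualified ``if and only if'' have a genuine gap.
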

\begin{proof}  $\partial_t\cdot \rho_1=0$ implies  $\mu_{C(M)}(\rho_1,\bar \rho_1)=0$. If $\epsilon = e^{\pi \sqrt{-1} {m\choose 2}}$, then using the bilinearity and symmetry of Mukai pairings,
\begin{eqnarray*}
\mu_{C(M)}(\rho,\bar \rho) &=& \mu_{C(M)}(\rho_1,-\sqrt{-1} dt\rho_2)+ \mu_{C(M)} (\sqrt{ -1}dt\rho_2,\bar \rho_1)\\
&=&\sqrt{ -1}(-1)^{|\rho_2|}dt(\mu_M(\rho_1,\bar \rho_2)+\epsilon\mu_M(\bar \rho_1,\rho_2)),\\
\end{eqnarray*}
from which the result follows.
\end{proof}

\begin{theorem}\label{spinortheorem}
Let $\rho_1,\rho_2\in \Omega^\bullet (M)$, and let $\rho=\rho_1+\sqrt{-1}dt\rho_2\in \Omega^\bullet (C(M))$. Then
\begin{itemize}
  \item[i)] $(\rho_1,\rho_2)$ is a mixed pair if and only if $\rho$ is a pure spinor defining a generalized almost complex structure on $C(M)$;
  \item[ii)] The mixed pair $(\rho_1,\rho_2)$ is integrable if and only if there exists ${\bf v}\in \Gamma(\T M\otimes \C)$ such that
\[
\partial_t \cdot dt\cdot d\rho = {\bf v}\cdot \rho\,.
\]
  \item[iii)] ${\rm Ann}(\rho)$ is involutive if and only if there exists ${\bf w}\in \Gamma(\T M\otimes \C)$ such that
\begin{equation}\label{normalspinor}d\rho_1={\bf w}\cdot\rho_1,\quad  d\rho_2={\bf w}\cdot\rho_2.\end{equation}
\end{itemize}
\end{theorem}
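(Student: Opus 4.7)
The overall strategy is to translate each statement into Clifford-algebraic identities relating $\rho,\rho_1,\rho_2$ and their derivatives on $C(M)$. I begin with two computational identities that do the bookkeeping throughout. Using $d(dt\wedge\rho_2)=-dt\wedge d\rho_2$, one obtains $d\rho=d\rho_1-\sqrt{-1}\,dt\cdot d\rho_2$. Combined with the Clifford relations $\partial_t\cdot dt+dt\cdot\partial_t=1$ and $dt\cdot dt=0$, and the fact that $\partial_t$ annihilates $dt$-free forms, this yields $\partial_t\cdot dt\cdot d\rho=d\rho_1$.

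For (i), I exploit the bijection between pure spinor lines and maximal isotropic subbundles via annihilators. In the forward direction, fix a mixed pair $(\rho_1,\rho_2)$ with isotropic sections $\e_1,\e_2$. The Clifford identity $\e_2\e_1\rho_1=-\e_1\e_2\rho_1+2\langle\e_1,\e_2\rangle\rho_1$ together with $\e_i\cdot\rho_i=0$, $\e_1\cdot\rho_2=\rho_1$ and $\e_2\cdot\rho_1=\rho_2$ forces $\langle\e_1,\e_2\rangle=1/2$, so $E=\mathrm{span}(\e_1,\e_2)$ has signature $(1,1)$. Using the identity $\mathrm{Ann}(\mathbf{v}\cdot\sigma)=(\mathrm{Ann}(\sigma)\cap\mathbf{v}^\perp)\oplus\C\mathbf{v}$ for isotropic $\mathbf{v}\notin\mathrm{Ann}(\sigma)$, applied to $\rho_1=\e_1\cdot\rho_2$, yields $\mathrm{Ann}(\rho_1)=L\oplus\C\e_1$ with $L:=\mathrm{Ann}(\rho_1)\cap\mathrm{Ann}(\rho_2)$ of rank $2n$. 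A direct Clifford calculation then shows $\e_1+\sqrt{-1}\,dt$ and $\e_2+\sqrt{-1}\,\partial_t$ annihilate $\rho$, so
\[
V:=L\oplus\C(\e_1+\sqrt{-1}\,dt)\oplus\C(\e_2+\sqrt{-1}\,\partial_t)\subseteq\mathrm{Ann}(\rho)
\]
is isotropic of rank $2n+2$, hence maximal, forcing $\mathrm{Ann}(\rho)=V$ and $\rho$ pure. Non-degeneracy is Lemma~\ref{Lemma}. For the converse, given $\rho$ pure, the projection $\mathrm{Ann}(\rho)\to\C\langle\partial_t,dt\rangle$ must be surjective: otherwise $\mathrm{Ann}(\rho)\cap(\T M\otimes\C)$ would be maximal isotropic in $\T M\otimes\C$ contained in $\mathrm{Ann}(\rho_1)\cap\mathrm{Ann}(\rho_2)$, forcing $\rho_1\propto\rho_2$ and $\mu_M(\rho_1,\bar\rho_2)\propto\mu_M(\rho_1,\bar\rho_1)$, which vanishes for parity reasons on odd-dimensional $M$, contradicting non-degeneracy via Lemma~\ref{Lemma}. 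Surjectivity supplies the isotropic sections realizing the mixed pair.

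For (iii), the standard pure-spinor criterion says $\mathrm{Ann}(\rho)$ is involutive iff $d\rho=\mathbf{V}\cdot\rho$ for some $\mathbf{V}\in\Gamma(\T C(M)\otimes\C)$. Writing $\mathbf{V}=\mathbf{v}+f\partial_t+g\,dt$ and matching $dt$-free and $dt$-components of both sides via the first computational identity yields
\begin{equation*}
d\rho_1=\mathbf{v}\cdot\rho_1+\sqrt{-1}f\rho_2,\qquad d\rho_2=\mathbf{v}\cdot\rho_2+\sqrt{-1}g\rho_1.
\end{equation*}
Exploiting $\e_1\cdot\rho_1=\e_2\cdot\rho_2=0$, $\e_1\cdot\rho_2=\rho_1$ and $\e_2\cdot\rho_1=\rho_2$, the substitution $\mathbf{w}:=\mathbf{v}+\sqrt{-1}g\e_1+\sqrt{-1}f\e_2$ transforms this system into $d\rho_i=\mathbf{w}\cdot\rho_i$ for $i=1,2$, which is exactly~(\ref{normalspinor}); the converse is immediate.

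For (ii), the analogous matching with $\mathbf{v}\in\Gamma(\T M\otimes\C)$ shows that $\partial_t\cdot dt\cdot d\rho=\mathbf{v}\cdot\rho$ forces simultaneously $d\rho_1=\mathbf{v}\cdot\rho_1$ and $\mathbf{v}\in\mathrm{Ann}(\rho_2)$, equivalently $d\rho_1\in\C\rho_2$. Extracting integrability of $\rho_1$ from such a $\mathbf{v}$ is immediate. The reverse direction is the main obstacle: starting from $\rho_1$ integrable via $d\rho_1=\mathbf{v}_1\cdot\rho_1$, shifting $\mathbf{v}_1\mapsto\mathbf{v}_1+c\e_1$ stays in $\mathbf{v}_1+\mathrm{Ann}(\rho_1)$ and alters $\mathbf{v}_1\cdot\rho_2$ by $c\rho_1$, so the theorem reduces to showing $\mathbf{v}_1\cdot\rho_2\in\C\rho_1$. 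I would attack this by differentiating the mixed pair identity $\rho_1=\e_1\cdot\rho_2$, invoking $d^2=0$, and combining with the Clifford--Lie derivative identity $d(\mathbf{v}\cdot\rho)+\mathbf{v}\cdot d\rho=\mathcal{L}_{\mathbf{v}}\rho$ (and symmetrically starting from $\rho_2$ integrable via $\e_2$) to confine the residue to the spinor line using the parity vanishing $\mu_M(\rho_1,\bar\rho_1)=0$ on odd-dimensional $M$.
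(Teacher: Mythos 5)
Your preliminary identities, your proof of part (iii), and the forward implication of part (i) are correct and essentially coincide with the paper's argument: the paper also reduces (iii) to matching the $dt$-free and $dt$-components of $d\rho=\mathbf{V}\cdot\rho$ and absorbs the $f\partial_t+g\,dt$ terms via $\w=\mathbf{v}+\sqrt{-1}(f\e_2+g\e_1)$, and its forward direction of (i) exhibits the same maximal isotropic subbundle of ${\rm Ann}(\rho)$ that you call $V$ (written there as $(\C\e_2)^\perp\cap{\rm Ann}(\rho_1)$ together with $dt-\sqrt{-1}\e_1$ and $\partial_t-\sqrt{-1}\e_2$). The converse of (i), however, is where the paper does most of its work, and your closing sentence ``surjectivity supplies the isotropic sections'' hides two genuine gaps. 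First, you must show that $\rho_1=\partial_t\cdot dt\cdot\rho$ and $\rho_2=-\sqrt{-1}\partial_t\cdot\rho$ are each \emph{pure}: your dichotomy only excludes the degenerate case ${\rm Ann}(\rho_1)={\rm Ann}(\rho_2)$, whereas a priori ${\rm Ann}(\rho_1)$ could have corank one; the paper rules this out by producing $J(dt)\in{\rm Ann}(\rho_1)\setminus{\rm Ann}(\rho_2)$ and combining this with the inclusions $(\C\partial_t)^\perp\cap{\rm Ann}(\rho)\subseteq{\rm Ann}(\rho_2)$ and $({\rm span}(\partial_t,dt))^\perp\cap{\rm Ann}(\rho)\subseteq{\rm Ann}(\rho_1)$. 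Second, you must produce \emph{real} isotropic sections $\e_1,\e_2$ of $\T M$ (not merely elements of ${\rm Ann}(\rho)$ lying over $dt$ and $\partial_t$) satisfying $\rho_1=\e_1\cdot\rho_2$ and $\rho_2=\e_2\cdot\rho_1$; the paper obtains them as $\e_1=J(dt)$, $\e_2=J(\partial_t)$ only after verifying $\langle J(dt),dt\rangle=\langle J(dt),\partial_t\rangle=0$ so that these land in $\T M$, and then checks the two Clifford intertwining identities by explicit computation. None of this is automatic from surjectivity of the projection onto ${\rm span}(\partial_t,dt)$.

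For part (ii) you prove only one implication and explicitly leave the other as a proposed line of attack; this is an incomplete proof, and moreover your own (correct) reduction shows the proposed attack cannot succeed. Since $\mathbf{v}\in\Gamma(\T M\otimes\C)$ forces $\mathbf{v}\cdot\rho=\mathbf{v}\cdot\rho_1-\sqrt{-1}dt(\mathbf{v}\cdot\rho_2)$, the displayed condition requires $\mathbf{v}\in{\rm Ann}(\rho_2)$, and since ${\rm Ann}(\rho_2)\cdot\rho_1=\C\rho_2$, it is equivalent to $d\rho_1\in C^\infty(M,\C)\,\rho_2$. This is strictly stronger than the existence of \emph{some} $\mathbf{v}_1$ with $d\rho_1=\mathbf{v}_1\cdot\rho_1$, which only places $d\rho_1$ in the rank-$(2n+1)$ bundle $(\T M\otimes\C)\cdot\rho_1$; the identity $\mathbf{v}_1\cdot\rho_2\in\C\rho_1$ that you hope to extract from $d^2=0$ fails in general. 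Indeed, in Example \ref{spinorsphere} one has $d\rho_1=-2\sqrt{-1}\nu_1\rho_1$ and $\rho_2=(g+\sqrt{-1}f)+\nu_1(\sqrt{-1}\nu_2+\nu_3)$, so $d\rho_1\notin\C\rho_2$ unless $f=g=0$, yet the mixed pair is integrable. The paper's one-line proof of (ii) rests on the identity $\partial_t\cdot dt\cdot d\rho=d\rho_1$ and implicitly compares only the $dt$-free components, i.e.\ reads the condition as $d\rho_1=\mathbf{v}\cdot\rho_1$, under which reading both implications are immediate and your difficulty disappears; under the literal reading you adopted, the reverse implication is not provable, so your part (ii) cannot be completed along the lines you sketch.
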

\begin{proof} Proof of (i): Suppose $(\rho_1,\rho_2)$ is such a mixed pair. Then $(\C\e_2)^\perp\cap {\rm Ann} (\rho_1)\subset {\rm Ann} (\rho)$. Since $\rho_1$ is pure, then $\dim {\rm Ann}(\rho)\ge m -1$. On the other hand, $dt-\sqrt{ -1}\e_1$ and $\partial_t-\sqrt{ -1}\e_2$  are linearly independent sections of ${\rm Ann}(\rho)\setminus {\rm Ann}(\rho_1)$ and thus $\rho$ is pure. Since $\rho_1= \e_1\cdot \rho_2$ and $\rho_2$ have opposite parity,  Lemma \ref{Lemma} shows that $\rho$ defines a generalized almost complex structure $J$ on $C(M)$. Conversely, if $\rho$ is pure and $\mu_{C(M)}(\rho,\overline{\rho})\neq 0$, then by Lemma \ref{Lemma} $(\rho_1,\bar \rho_2)$ is non-degenerate. Moreover, $\rho_2=-\sqrt{ -1}\partial_t\cdot \rho$ implies that
\begin{equation}\label{An}
(\C \partial_t)^\perp \cap {\rm Ann}(\rho) \subseteq {\rm Ann}(\rho_2)\,.
\end{equation}
Therefore, $\rho_2$ is pure. Similarly, $\rho_1 = \partial_t \cdot dt \cdot \rho$ implies
\begin{equation}\label{Ann}
({\rm span}(\partial_t, dt))^\perp \cap {\rm Ann}(\rho) \subseteq {\rm Ann}(\rho_1)
\end{equation}
and therefore $\dim {\rm Ann}(\rho_1) \ge m -1$. Since $dt-\sqrt{-1}J(dt)\in {\rm Ann}(\rho)$, one has $dt\cdot \rho_1 = \sqrt{-1} J(dt)\cdot \rho$. Since $dt$ is isotropic and $J$ is orthogonal,
\[
J(dt)\cdot dt\cdot \rho_1 =0\, .
\]
Using
\[
\langle J(dt),dt\rangle = \langle -dt,J(dt)\rangle =0\,,
\]
we conclude that $J(dt)\in {\rm Ann}(\rho_1)$. Similarly, $\partial_t-\sqrt{-1}J(\partial_t)\in {\rm Ann}(\rho)$ implies $J(\partial_t)\in {\rm Ann}(\rho_2)$. In particular,
\[
0=J(dt)\cdot J(\partial_t)\cdot \rho_2= \rho_2-J(\partial_t)\cdot J(dt)\cdot \rho_2
\]
and since $\rho_2\neq 0$, we deduce that $J(dt)\in{\rm Ann}(\rho_1)\setminus {\rm Ann}(\rho_2)$. This proves that $\rho_1$ is pure for otherwise $\dim {\rm Ann}(\rho_1)= m -1$ which together with (\ref{An}) and (\ref{Ann}) would imply ${\rm Ann}(\rho_1)\subseteq {\rm Ann}(\rho_2)$, contradicting $J(dt)\neq 0$. From
$$0=\partial_t\cdot\left(dt-\sqrt{-1}J(dt)\right)\cdot\rho=\rho_1-J(dt)\cdot\rho_2$$
it follows that $0=\langle\partial_t,J(dt)\rangle=-\langle dt,J(\partial_t)\rangle$ and thus
$$0=dt\cdot\left(dt-\sqrt{-1}J(dt)\right)\cdot\rho=\sqrt{-1}dt\cdot\left(\rho_2-J(dt)\cdot\rho_1\right).$$
The result follows upon setting $\e_1= J(dt)$ and $\e_2 =  J(\partial_t)$. \\
Proof of (ii): it follows from
\[
\partial_t\cdot dt\cdot d\rho = d\rho_1
\]
and the definition of integrability of mixed pairs.\\
Proof of (iii): if equations (\ref{normalspinor}) hold, then $$d\rho=d\rho_1-\sqrt{-1}dtd\rho_2=\w\cdot\rho_2-\sqrt{-1}dt\w\cdot\rho_2=\w\cdot\left(\rho_1+\sqrt{-1}dt\rho_2\right),$$
whence ${\rm Ann}(\rho)$ is involutive. Conversely, suppose there exist $\mathbf{v}\in\Gamma(\T M\otimes \C)$, $f,g\in C^{\infty}(M)$ such that
\begin{align}d\rho_1-\sqrt{-1}dtd\rho_2&=\left(\mathbf{v}+f\partial_t+gdt\right)\cdot (\rho_1+\sqrt{-1}dt\rho_2)\nonumber\\
&=(\mathbf{v}+\sqrt{-1}f\e_2)\cdot\rho_1+dt\left(-\sqrt{-1}\mathbf{v}+g\e_1\right)\cdot\rho_2.\nonumber\end{align}
Then, equations (\ref{normalspinor}) hold, with $\w=\mathbf{v}+\sqrt{-1}\left(f\e_2+g\e_1\right)$.

\end{proof}
\begin{example}\label{spinorsphere}
In the Example \ref{Sphere}, it is easy to check that one can choose
\[
\rho_1 = \sqrt{-1}\nu_2+\nu_3;\qquad \rho_2=(g+\sqrt{-1}f)+\nu_1(\sqrt{-1})\nu_2+\nu_3)\,.
\]
From equation (\ref{commutator}), $d\nu_k = 2\epsilon_{ij}^k \nu_i\nu_j$ which implies
\[
d\rho_1 = -2\sqrt{-1}\nu_1\rho_1 = (2 \sqrt{-1} \e_2 + (g+\sqrt{-1}f)(V_2+\sqrt{-1}V_3)) \cdot \rho_1\,.
\]
Therefore, the mixed pair $(\rho_1,\rho_2)$ is integrable, as expected from Proposition \ref{involutivity_sphere}\,.

\end{example}

\begin{prop}\label{types}
Let $(\rho_1,\rho_2)$ be a mixed pair, and let $(E,L)$ be the corresponding generalized almost contact structure. Then
\[
2t_L ={\rm type}(\rho_1)+{\rm type}(\rho_2)+1.
\]

\end{prop}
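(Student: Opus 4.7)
The plan is to translate the identity into a dimension statement about the projection $a\colon \T M\otimes \C\to TM\otimes \C$ and then exploit the mixed-pair relations $\rho_1=\e_1\cdot\rho_2$, $\rho_2=\e_2\cdot\rho_1$. For any pure spinor $\sigma$, the local normal form $\sigma=e^{B+\sqrt{-1}\omega}\theta_1\wedge\cdots\wedge\theta_k$ (with $\theta_1,\ldots,\theta_k$ linearly independent) yields ${\rm type}(\sigma)=m-{\rm dim}_{\C}a({\rm Ann}(\sigma))$, where $m=\dim M=2n+1$. Combined with the equality $t_L=m-{\rm dim}_{\C}a(L)$ (read off from the proof of the proposition bounding $p_E$ and $t_L$) and with ${\rm Ann}(\rho_i)=L\oplus \C\e_i$, the claim becomes
\[
{\rm dim}_{\C}a(L\oplus \C\e_1)+{\rm dim}_{\C}a(L\oplus\C\e_2)=2\,{\rm dim}_{\C}a(L)+1.
\]
Writing $X_i:=a(\e_i)$ and $\epsilon_i:={\rm dim}_{\C}a(L\oplus \C\e_i)-{\rm dim}_{\C}a(L)\in\{0,1\}$, this is the assertion $\epsilon_1+\epsilon_2=1$, i.e.\ that exactly one of $X_1,X_2$ lies outside $a(L)$.

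I would next extract the following dichotomy from the mixed-pair relations. Clifford contraction by a vector lowers degree by $1$, wedging by a $1$-form raises it by $1$, and $\rho_1,\rho_2$ have opposite parity, so $\rho_i=\e_i\cdot\rho_j$ forces $|{\rm type}(\rho_1)-{\rm type}(\rho_2)|=1$. Assume without loss of generality that ${\rm type}(\rho_2)={\rm type}(\rho_1)-1$; the reverse case follows by swapping indices. Inspecting the degree-$({\rm type}(\rho_1)-1)$ component of $\e_2\cdot\rho_1=\iota_{X_2}\rho_1+\alpha_2\wedge\rho_1$, this hypothesis is equivalent to the nonvanishing of $\iota_{X_2}$ on the leading component of $\rho_1$, which (using the local normal form above) translates into $X_2\notin a({\rm Ann}(\rho_1))=a(L)+\C X_1$. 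Dually, the vanishing of all degrees of $\rho_1=\e_1\cdot\rho_2$ below ${\rm type}(\rho_1)$ requires $\iota_{X_1}$ to annihilate the leading component of $\rho_2$, i.e.\ $X_1\in a({\rm Ann}(\rho_2))=a(L)+\C X_2$.

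Combining these two conditions is now a short piece of linear algebra: writing $X_1=Y+cX_2$ with $Y\in a(L)$, the case $c\neq 0$ would give $X_2\in a(L)+\C X_1$, contradicting the first condition, so $c=0$ and hence $X_1\in a(L)$, yielding $\epsilon_1=0$. Simultaneously $X_2\notin a(L)+\C X_1\supseteq a(L)$ gives $\epsilon_2=1$, so $\epsilon_1+\epsilon_2=1$, and the symmetric case produces $\epsilon_1=1$, $\epsilon_2=0$. The main obstacle is really the bookkeeping: one must keep track of which component of each $\rho_i$ must vanish and which must not, and then transfer the Clifford-vanishing statements into statements about $a({\rm Ann}(\rho_i))$ via the local normal form. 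A useful sanity check is that ${\rm type}(\rho_1)+{\rm type}(\rho_2)$ is forced to be odd (opposite parities) while $2t_L$ is even.
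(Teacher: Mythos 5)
Your proof is correct, and its skeleton coincides with the paper's: both reduce the identity to ${\rm type}(\rho_i)=t_L-\epsilon_i$ with $\epsilon_i\in\{0,1\}$ recording whether $a(\e_i)\notin a(L)$, and then show that $\epsilon_1+\epsilon_2=1$. The difference lies in how that last fact is established. The paper introduces the quantity $\delta=p_E-\dim\left(a(E)\cap a(L)\right)$ and simply asserts that $\rho_1=\e_1\cdot\rho_2$ forces $\delta=1$, whereas you derive $\epsilon_1+\epsilon_2=1$ from an explicit parity-and-leading-term analysis of the Clifford relations $\rho_2=\e_2\cdot\rho_1$ and $\rho_1=\e_1\cdot\rho_2$: opposite parity plus the bound $|{\rm type}(\rho_1)-{\rm type}(\rho_2)|\le 1$ gives a difference of exactly one, and inspecting which low-degree components must vanish or survive pins down that exactly one of $a(\e_1),a(\e_2)$ lies in $a(L)$. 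Your version is therefore more detailed precisely at the point where the paper is tersest, and it yields the slightly sharper conclusion that the spinor of larger type is the one whose anchor $a(\e_i)$ lies in $a(L)$; the paper's route, by contrast, packages the same information into the invariant $\delta$ without spelling out the verification.
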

\begin{proof} For $i=1,2$,
\begin{equation}\label{t_i}
{\rm type}(\rho_i) = \dim M -\dim a(\C \e_i) -\dim a(L) + \dim (a(L)\cap a(\C \e_i))
\end{equation}
implies
\begin{equation*}
{\rm type}(\rho_i)-t_L =
\left\{
\begin{array}{lll}
-1 & {\rm if} & a(\e_i)\notin a(L)\\
&&\\
0 &{\rm if}& a(\e_i)\in a(L)
\end{array}
\right.
\end{equation*}
and therefore
\begin{equation*}
|{\rm type}(\rho_1)-{\rm type}(\rho_2)| =
\left\{
\begin{array}{lll}
1 & {\rm if} & \delta =1\\
&&\\
0 &{\rm if}& \delta =0
\end{array}
\right.
\end{equation*}
where
\[
\delta = p_E - \dim (a(E)\cap a(L))\,.
\]
On the other hand, $\rho_1 = \e_1\cdot \rho_2$ implies that $\delta$ always equals one. Summing the equations in (\ref{t_i}),
\[
{\rm type}(\rho_1)+{\rm type}(\rho_2) = 2 t_L - \delta = 2t_L -1\,.
\]
\end{proof}

%Metric structures, generalized Sasakian structures

\section{T-Duality}
We begin by recalling some definitions and notations from \citep{Gualtieri-Cavalcanti} (see also \citep{BEM1}, \citep{BEM2}). Let $\pi:M\to B$ and $\tilde\pi:\tilde M\to B$ be principal bundles with $k$-dimensional torus fibers $T^k$ and let  $p$ and $\tilde p$ denote the projections of the fiber product $M\times _B \tilde M$ onto $M$ and $\tilde M$, respectively. We denote by $\Omega_{T^k}^\bullet(M)$ and $\Omega_{T^k}^\bullet(\tilde M)$ the $T^k$-invariant differential forms on $M$ and $\tilde M$, respectively. Given $H\in \Omega_{T^k}^3(M)$ and $\tilde H\in\Omega_{T^k}^3(\tilde M)$, we say that $(M,H)$ and $(\tilde M, \tilde H)$ form a {\it T-dual pair} with $k$-dimensional fibers and {\it pairing} $F\in \Omega_{T^{2k}}^2 (M\times_B\tilde M)$ if $dF=p^*H-\tilde p^* \tilde H$ and $F$ is non-degenerate as a pairing between vertical tangent vectors of $M$ and vertical tangent vectors of $\tilde M$. Given a T-dual pair $(M,H)$, $(\tilde M,\tilde H)$  with $k$-dimensional fibers and with pairing $F$, define
\[
\tau_F:(\Omega_{T^k}^\bullet (M),d_H)\to (\Omega_{T^k}^\bullet(\tilde M),d_{\tilde H})
\]
by the formula
\[
\tau_F(\rho)=\int_{T^k} e^F  p^*(\rho)\,.
\]
Consider also the map

\[
\varphi_F: \T M/{T^k}\to \T\tilde M/{T^k}
\]
defined by $\varphi_F(X+\xi) = \tilde p_*\left(\hat X + p^*\xi - F(\hat X)\right)$, where $\hat X$ is the unique lifting of $X$ to $M\times_B\tilde M$ such that $p^*\xi - F(\hat X)$ is basic with respect to $\tilde p$.
\begin{theorem}[\cite{BEM1},\cite{BEM2},\cite{Gualtieri-Cavalcanti}]\label{Tduality} Let $(M,H)$ and $(\tilde M,\tilde H)$ be a T-dual pair with $k$-dimensional fibers and pairing $F$. Let  also $\tau_F$ and $\varphi_F$ be as above. Then
\begin{itemize}
\item[i)] $\tau_F$ is an isomorphism of complexes;
\item[ii)] $\varphi_F$ is an isomorphism of Courant algebroids;
\item[iii)] for all ${\bf v}\in \Gamma(\T M)$ and $\rho\in \Omega_{T^k}^\bullet(\tilde M)$,
\[
\tau_F({\bf v}\cdot \rho)=\varphi_F({\bf v})\cdot \tau_F(\rho)\,.
\]
\item[iv)] If $\rho=e^{\sqrt{-1} \omega} \Omega\in \Omega_{T^k}^\bullet (M)$ is a pure spinor and  $j$ is the smallest integer such that
\[
\int_{T^k} (F+\sqrt{-1}\omega)^j \Omega \neq 0\,,
\]
then
\[
{\rm type}(\tau_F(\rho))={\rm type}(\rho) + 2j-k\,.
\]
In particular when $k=1$,  ${\rm type}(\tau_F(\rho))-{\rm type}(\rho)$ equals $1$ if $\Omega$ is basic and $-1$ otherwise.
\end{itemize}
\end{theorem}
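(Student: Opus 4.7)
Since the theorem synthesizes results from \citep{BEM1}, \citep{BEM2} and \citep{Gualtieri-Cavalcanti}, the plan is to verify each part by tracing the key identities on the correspondence space $M\times_B\tilde M$, which carries the pulled-back 3-forms $p^*H$, $\tilde p^*\tilde H$ and the 2-form $F$ satisfying $dF=p^*H-\tilde p^*\tilde H$.

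For (i), I would show that $\tau_F$ intertwines $d_H$ and $d_{\tilde H}$ by a direct computation. Since integration along the fiber $T^k$ commutes with $d$ on $T^k$-invariant forms and $de^F=dF\wedge e^F=(p^*H-\tilde p^*\tilde H)\wedge e^F$, one computes
\[
d\tau_F(\rho)=\int_{T^k}e^F\wedge p^*(H\wedge\rho+d\rho)-\tilde H\wedge\tau_F(\rho)=\tau_F(d_H\rho)-\tilde H\wedge\tau_F(\rho)\,,
\]
which rearranges to $d_{\tilde H}\tau_F(\rho)=\tau_F(d_H\rho)$. Invertibility follows from the non-degeneracy of $F$ on the vertical directions: the inverse is given by an analogous formula $\tilde\tau_{-F}$, up to a normalization by the volume of $T^k$.

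For (ii), I would realize $\varphi_F$ as the composition of pullback along $p$, a $B$-field transform by $F$ on $\T(M\times_B\tilde M)$, and pushforward along $\tilde p$, each compatible with $T^k$-invariant sections by construction. Since $B$-field transforms are symmetries of the Courant structure, and since pullback/pushforward along principal torus bundles preserve the (twisted) Dorfman bracket on invariant sections (with the twists $H$ and $\tilde H$ matched by $dF=p^*H-\tilde p^*\tilde H$), $\varphi_F$ is a Courant algebroid isomorphism. Statement (iii) then follows because Clifford multiplication is intertwined with pullback and with $B$-field shifts via the standard identities $p^*(\xi\cdot\sigma)=p^*\xi\cdot p^*\sigma$ and $e^F\cdot(\mathbf v\cdot\sigma)=(e^F\mathbf ve^{-F})\cdot(e^F\cdot\sigma)$, and the decomposition $\mathbf v = \hat{\mathbf v} + (\text{basic part})$ is precisely the one used to define $\varphi_F$.

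For (iv), I would expand
\[
\tau_F(e^{\sqrt{-1}\omega}\Omega)=\int_{T^k}\sum_{j\ge 0}\frac{(F+\sqrt{-1}\omega)^j}{j!}\wedge p^*\Omega\,,
\]
noting that only summands whose fiber degree equals $k$ survive integration, so the lowest-degree surviving term corresponds exactly to the minimal $j$ in the statement. Purity of $\tau_F(\rho)$ is inherited from $\rho$ through (ii) and (iii) applied to the underlying Dirac structure, and the type is then read off as the degree of the lowest surviving term, namely $\deg(p^*\Omega)+2j-k={\rm type}(\rho)+2j-k$. For $k=1$ this reduces to the claimed dichotomy: when $\Omega$ is non-basic the $j=0$ term survives, while when $\Omega$ is basic the $j=1$ term is the first to survive, giving a type shift of $-1$ or $+1$ respectively. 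The main obstacle in executing the plan is the bookkeeping in (iv): one must simultaneously verify purity of $\tau_F(\rho)$ and correctly account for both the $B$-field shift and the dimensional drop caused by fiber integration. The other parts are essentially formal once the correspondence space is set up.
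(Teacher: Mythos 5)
The paper does not prove this theorem: it is imported verbatim from the cited references (Bouwknegt--Evslin--Mathai and Cavalcanti--Gualtieri) and used as a black box in the proof of Theorem \ref{Tdualcontact}, so there is no internal proof to compare against. Your sketch is a correct reconstruction of the arguments in those references: the chain-map computation for (i) via $de^F=(p^*H-\tilde p^*\tilde H)\wedge e^F$, the fiber-degree counting for (iv), and the $k=1$ dichotomy are all as in Cavalcanti--Gualtieri. One structural remark: the reference proof runs in the opposite direction from yours on (ii) and (iii). It first establishes the Clifford intertwining $\tau_F(\mathbf v\cdot\rho)=\varphi_F(\mathbf v)\cdot\tau_F(\rho)$ by the direct computation you indicate, and then \emph{deduces} that $\varphi_F$ preserves the twisted bracket from (i) and (iii) via the derived-bracket identity $[\mathbf v,\mathbf w]_H\cdot\rho=[[d_H,\mathbf v\cdot],\mathbf w\cdot]\rho$. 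This is worth adopting, because the step in your plan asserting that ``pullback/pushforward along principal torus bundles preserve the twisted Dorfman bracket on invariant sections'' is essentially the content of (ii) itself and is the least justified link in your chain; the derived-bracket route replaces it with a purely formal consequence of (i) and (iii). The rest of your plan, including the purity of $\tau_F(\rho)$ via $\varphi_F(\mathrm{Ann}(\rho))\subseteq\mathrm{Ann}(\tau_F(\rho))$ together with orthogonality of $\varphi_F$ and nonvanishing of $\tau_F(\rho)$, and the observation that distinct powers $j'$ contribute components of distinct degrees $\mathrm{type}(\rho)+2j'-k$ so no cancellation can occur, is sound.
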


\begin{theorem}\label{Tdualcontact}
Let $(M,H)$ and $(\tilde M,\tilde H)$ be a T-dual pair with $k$-dimensional fibers and pairing $F$.
\begin{itemize}
  \item[i)] If $(E,L)$ is a $T^k$-invariant generalized almost contact structure on $M$, then $(\varphi_F(E),\varphi_F(L))$ is a $T^k$-invariant generalized almost contact structure on $\tilde M$.
\item[ii)] If $(\Phi,\e_1,\e_2)$ is a $T^k$-invariant triple on $M$, then $(\varphi_F\circ\Phi\circ \varphi_F^{-1},\varphi_F(\e_1),\varphi_F(\e_2))$ is a $T^k$-invariant triple on $\tilde M$.

\item[iii)] The map $\varphi_F$ preserves twisted (strong) integrability and twisted normality.
\item[iv)] If $(\rho_1,\rho_2)$ is a $T^k$-invariant mixed pair on $(M,H)$, then $(\tau_F (\rho_1),\tau_F(\rho_2))$ is a $T^k$-invariant mixed pair on $(\tilde M,\tilde H)$.
\item[v)] Let $\rho_1=e^{\sqrt{-1}\omega_1}\Omega_1,\ \rho_2=e^{\sqrt{-1}\omega_2}\Omega_2\in \Omega_{T^k}^\bullet (M) $ be such that $(\rho_1,\rho_2)$ is a mixed pair and let $(E,L)$ be the corresponding generalized almost contact structure. If $j_1, j_2$ are the smallest integers such that
\[
\int_{T^k} (F+\sqrt{-1} \omega_i)^{j_i} \Omega_i \neq 0\,,
\]
for $i=1,2$, then
\[
t_{\varphi_F(L)}-t_L = j_1 + j_2 - k\,.
\]
In particular, if $k=1$, then $t_{\varphi_F(L)}-t_L$ equals $1$ if $\Omega_1$ and $\Omega_2$ are basic, $-1$ if $\Omega_1$ and $\Omega_2$ are both not basic and $0$ otherwise. Moreover,
\[
|p_{\varphi_F(E)}-p_E| =
\left\{
\begin{array}{rl}
0& {\rm if}\quad  \pi_*(a(E))=0=\pi_*(a(\varphi_F(E))) \\
& \\
1 & {\rm otherwise}\,.
\end{array}
\right.
\]
\end{itemize}
\end{theorem}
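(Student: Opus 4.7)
The overall strategy is to leverage Theorem \ref{Tduality}: $\varphi_F$ is an isometric isomorphism of Courant algebroids with respect to the twisted brackets, while $\tau_F$ is a quasi-isomorphism of twisted de Rham complexes intertwining the Clifford actions via $\varphi_F$. Most of the theorem's content then transports directly across these structural isomorphisms.

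For (i), the defining properties of $(E,L)$ (signature $(1,1)$ of $\langle\,,\,\rangle$ restricted to $E$, maximal isotropy of $L$, and $L\cap\overline{L}=0$) are algebraic conditions in the inner product, hence preserved by the orthogonal map $\varphi_F$; triviality of $\varphi_F(E)$ is obtained by transporting a global frame, and $T^k$-invariance is manifest. Part (ii) is analogous since the triple axioms are algebraic in $\langle\,,\,\rangle$. For (iii), involutivity of $L\oplus(\ell\otimes\C)$ with respect to the twisted Dorfman bracket transports along the Courant morphism $\varphi_F$, yielding (strong) integrability; normality reduces via Theorem \ref{normalinv} to an intrinsic condition on isotropic frames of $E$ and the projection of exact $1$-forms onto $E^{\bot}$, which is likewise preserved. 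For (iv), apply Theorem \ref{Tduality}(iii) to $\rho_1=\e_1\cdot\rho_2$ and $\rho_2=\e_2\cdot\rho_1$ to obtain the analogous Clifford relations between $\tau_F(\rho_1)$ and $\tau_F(\rho_2)$ with the isotropic sections $\varphi_F(\e_i)$; purity transports via $\mathrm{Ann}(\tau_F(\rho_i))=\varphi_F(\mathrm{Ann}(\rho_i))$, and non-degeneracy of the pair follows from the standard compatibility of $\tau_F$ with Mukai pairings (or equivalently because $(\varphi_F(E),\varphi_F(L))$ is already a generalized almost contact structure by (i), forcing $\mu_{\tilde M}(\tau_F(\rho_1),\overline{\tau_F(\rho_2)})\neq 0$).

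The type formula in (v) then combines Theorem \ref{Tduality}(iv) with Proposition \ref{types}:
\[
2t_{\varphi_F(L)}-1=\sum_{i=1}^{2}\mathrm{type}(\tau_F(\rho_i))=\sum_{i=1}^{2}\bigl(\mathrm{type}(\rho_i)+2j_i-k\bigr)=2t_L-1+2(j_1+j_2-k),
\]
yielding $t_{\varphi_F(L)}-t_L=j_1+j_2-k$. The three cases when $k=1$ follow at once since each $j_i\in\{0,1\}$ depending on whether $\Omega_i$ is basic.

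The main obstacle is the final claim about $p_E$. The plan is to unpack the formula $\varphi_F(X+\xi)=\tilde p_*(\hat X+p^*\xi-F(\hat X))$, where $\hat X$ is the horizontal lift of $X$: $\varphi_F$ preserves the horizontal tangent subbundle, while the non-degeneracy of $F$ on verticals interchanges vertical tangent vectors in $\T M$ with vertical cotangent vectors in $\T\tilde M$ (and conversely). Thus $a(\varphi_F(E))$ and $a(E)$ agree on their horizontal content, while their vertical content is determined by swapping the vertical tangent and vertical cotangent components of the generators of $E$. A case analysis on $p_E\in\{1,2\}$ and on which generators of $E$ carry vertical tangent versus vertical cotangent content then shows that $|p_{\varphi_F(E)}-p_E|=0$ precisely when neither $\pi_*(a(E))$ nor $\pi_*(a(\varphi_F(E)))$ is nonzero, and equals $1$ otherwise. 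This bookkeeping is the most delicate step, although it uses only the explicit definition of $\varphi_F$.
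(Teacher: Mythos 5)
Your proposal is correct, and for parts (i)--(iii) and (v) it follows essentially the same route as the paper: (i)--(iii) are transported across the Courant-algebroid isomorphism $\varphi_F$ of Theorem \ref{Tduality}, and the type formula in (v) is exactly the paper's computation combining Theorem \ref{Tduality}(iv) with Proposition \ref{types} (the paper is, if anything, terser than you on the final $p_E$ claim, dismissing it as ``directly from the definition of $p_E$ and of $\varphi_F$,'' whereas you at least sketch the horizontal/vertical bookkeeping).

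Part (iv) is where you genuinely diverge. The paper does not argue on $M$ at all: it extends the torus fibrations trivially to the cones $C(M)$ and $C(\tilde M)$, observes that these form a T-dual pair with the same pairing $F$, applies $\tau_F$ to the single pure spinor $\rho=\rho_1+\sqrt{-1}\,dt\,\rho_2$ to get $\tau_F(\rho)=\tau_F(\rho_1)+\sqrt{-1}\,dt\,\tau_F(\rho_2)$, and then invokes Theorem \ref{spinortheorem}(i) to descend back to a mixed pair on $\tilde M$. You instead work directly on $M$, applying the intertwining identity $\tau_F(\mathbf{v}\cdot\rho)=\varphi_F(\mathbf{v})\cdot\tau_F(\rho)$ to the defining relations $\rho_1=\e_1\cdot\rho_2$, $\rho_2=\e_2\cdot\rho_1$, and transporting purity and non-degeneracy via $\mathrm{Ann}(\tau_F(\rho_i))=\varphi_F(\mathrm{Ann}(\rho_i))$ and the compatibility of $\tau_F$ with Mukai pairings. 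Both arguments are sound. Yours is more elementary and self-contained on $M$ (it avoids having to check that the cones themselves form a T-dual pair), while the paper's reuses the cone machinery of Theorem \ref{spinortheorem} and fits its general strategy of reducing odd-dimensional statements to generalized complex geometry one dimension up; note that your fallback for non-degeneracy (deducing $\mu_{\tilde M}(\tau_F(\rho_1),\overline{\tau_F(\rho_2)})\neq 0$ from the fact that $\varphi_F(L\oplus\C\e_1)\cap\overline{\varphi_F(L\oplus\C\e_2)}=0$) does work, since $\varphi_F$ is real and the Chevalley pairing of two pure spinors is nonzero exactly when the corresponding maximal isotropics are transverse.
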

\begin{proof}The first three statements are a direct consequence of the previous theorem. We now prove (iv): the torus fibration on $M$ and $\tilde M$ extend trivially to $C(M)$ and $C(\tilde M)$ in such a way that $C(M)$ and $C(\tilde M)$ are T-dual with respect to the obvious duality pairing defined by $F$ (which we denote by the same letter). Moreover, the pure spinor $\rho=\rho_1+\sqrt{ -1}dt\rho_2$ on $C(M)$ is $T^k\times \R^+$-invariant and Theorem \ref{Tduality} implies that
\[
\tau_F(\rho)=\tau_F(\rho_1)+\sqrt{ -1}dt\tau_F(\rho_2)
\]
is a $T^k\times \R^+$-invariant pure spinor. From Theorem \ref{spinortheorem} we conclude that $(\tau_F(\rho_1),\tau_F(\rho_2))$ is a $T^k$-invariant mixed pair.\\
Finally, we prove (v): the second statement follows directly from the definition of $p_E$ and of $\varphi_F$. For the first, let $t_i={\rm type}(\rho_i)$ and $\hat t_i={\rm type}(\tau_F(\rho_i))$ for $i=1,2$. Then, Theorem \ref{Tduality} and Proposition \ref{types} imply
\[
2(t_{\varphi_F(L)}-t_L)= \hat t_1+\hat t_2 - t_1 - t_2 = 2j_1 +2j_2 - 2k\,,
\]
which concludes the proof.
\end{proof}

\begin{prop}
Let $(\rho_1,\rho_2)$ be a mixed pair on $M$. Let $F=-dtd\tilde t$ be the standard pairing on $S^1\times S^1$ and let $\tau_F$ denote the induced T-duality automorphism acting on $\Omega^\bullet_{S^1}(M\times S^1)$.  Then $(\tau_F(\rho_1),\tau_F(\rho_2))$ = $(\rho_2,\rho_1)$.
\end{prop}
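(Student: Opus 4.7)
The plan is to apply Theorem \ref{spinortheorem}(i) to repackage the mixed pair $(\rho_1,\rho_2)$ on $M$ as the single $S^1$-invariant pure spinor
\[
\rho=\rho_1+\sqrt{-1}\,dt\,\rho_2\in\Omega^\bullet_{S^1}(M\times S^1),
\]
T-dualize $\rho$ using the explicit integral formula for $\tau_F$, and then recognize the output, via Theorem \ref{spinortheorem}(i), as the pure spinor associated to the swapped mixed pair $(\rho_2,\rho_1)$. In other words, the identity $(\tau_F(\rho_1),\tau_F(\rho_2))=(\rho_2,\rho_1)$ is to be read as the assertion that $\tau_F(\rho)$ is, up to nonzero scalar, of the form $\rho_2+\sqrt{-1}\,d\tilde t\,\rho_1$.

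The actual calculation is expected to be short. Since $F=-dt\,d\tilde t$ is the wedge of two 1-forms, one has $F\wedge F=0$ and so $e^F=1-dt\,d\tilde t$. Using $dt\wedge dt=0$ and the fact that $\rho_1,\rho_2$ are pulled back from $M$, the integrand simplifies to
\[
e^F\wedge p^*\rho=\rho_1+\sqrt{-1}\,dt\,\rho_2-dt\,d\tilde t\,\rho_1.
\]
Fiber integration over $S^1_t$ extracts the coefficient of $dt$ and gives
\[
\tau_F(\rho)=\sqrt{-1}\,\rho_2-d\tilde t\,\rho_1=\sqrt{-1}\,\bigl(\rho_2+\sqrt{-1}\,d\tilde t\,\rho_1\bigr),
\]
which, after dividing by the irrelevant scalar $\sqrt{-1}$ (pure spinors are only defined up to scale) and identifying $\tilde t$ with $t$, is exactly the pure spinor attached by Theorem \ref{spinortheorem}(i) to the mixed pair $(\rho_2,\rho_1)$.

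The main obstacle is interpretive rather than computational. A literal componentwise application of $\tau_F$ to $\rho_1$, viewed as a pulled-back form on $M\times S^1$, produces $-d\tilde t\,\rho_1$, which is not a form on $M$ at all. The content of the proposition must therefore be read through the pure spinor correspondence of Theorem \ref{spinortheorem}(i); once this is recognized, the remaining work is just expanding the two-term exponential $e^F$ and performing two elementary fiber integrations, with the only nontrivial check being that the overall factor of $\sqrt{-1}$ and the anticommutation $dt\,d\tilde t=-d\tilde t\,dt$ combine to produce exactly the form required by Theorem \ref{spinortheorem}(i) on $M\times\tilde S^1$.
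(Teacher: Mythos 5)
Your proof is correct and follows essentially the same route as the paper: both assemble the pair into the single pure spinor $\rho=\rho_1+\sqrt{-1}\,dt\,\rho_2$ on $M\times S^1$, expand $e^F=1-dt\,d\tilde t$, fiber-integrate, and read off the swapped pair from the result. The only discrepancy is an overall scalar (the paper records an extra $(-1)^{|\rho_2|}$ from its fiber-integration convention), which is immaterial since the mixed pair is determined only up to scale.
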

\begin{proof} Notice that $\rho=\rho_1+\sqrt{ -1}dt\rho_2$, originally defined as a spinor on $C(M)$, also defines a spinor on $M\times S^1$. Then
\[
\tau_F(\rho)=\int_{S^1} e^F \rho = \sqrt{-1}(-1)^{|\rho_2|}(\rho_2+\sqrt{-1}dt\rho_1).
\]
\end{proof}

\begin{prop}
Let $(E,L)$ be a generalized almost contact structure on $M$. If, for some $\omega\in \Omega^2(M\times S^1)$ and $\Omega\in \Omega^{t_J}(M\times S^1)$, $\rho=e^{\sqrt{-1}\omega}\Omega$ is the corresponding pure spinor on $M\times S^1$, then $0\le t_L-t_J\le 1$. Moreover, the following are equivalent
\begin{itemize}
\item[i)] $t_L=t_J$,
\item[ii)] $\int_{S^1} \Omega \neq 0\,$ ,
\item[iii)] ${\rm type}(\rho_1)-{\rm type}(\rho_2) = 1$\,.
\end{itemize}
\end{prop}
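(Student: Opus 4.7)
The plan is to use the mixed pair formulation and the explicit form $\rho = \rho_1+\sqrt{-1}\,dt\cdot\rho_2$. Let $(\rho_1,\rho_2)$ be a mixed pair representing $(E,L)$, so that by Theorem \ref{spinortheorem}(i) the form $\rho$ is a pure spinor defining $J$ on $M\times S^1$ (equivalently, on $C(M)$ via exponentiation). Set $t_i={\rm type}(\rho_i)$. The proof of Proposition \ref{types} records the key fact that $|t_1-t_2|=1$ (since $\rho_1=\e_1\cdot\rho_2$ forces the quantity $\delta$ appearing there to equal $1$), while the statement of the same proposition gives $t_1+t_2=2t_L-1$. Combining these forces $\{t_1,t_2\}=\{t_L-1,t_L\}$.

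Next, since $dt$ has degree one, the type of $\rho$ satisfies $t_J=\min(t_1,\,t_2+1)$. If $t_1=t_L$ and $t_2=t_L-1$, then $t_J=t_L$; if instead $t_1=t_L-1$ and $t_2=t_L$, then $t_J=t_L-1$. This immediately gives the bounds $0\le t_L-t_J\le 1$ and shows that (i) is equivalent to the condition $t_1-t_2=1$, which is exactly (iii).

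For the equivalence with (ii), I would inspect the minimal-degree component $\Omega$ of $\rho=e^{\sqrt{-1}\omega}\Omega$, which is simply the degree-$t_J$ piece of $\rho_1+\sqrt{-1}dt\cdot\rho_2$. In the first case ($t_J=t_L$), the term $\sqrt{-1}dt\cdot \rho_2^{[t_2]}$ has degree $t_2+1=t_J$ and so contributes to $\Omega$ a nontrivial $dt$-component, whence $\int_{S^1}\Omega\ne 0$. In the second case ($t_J=t_L-1$), the $dt$-contribution has degree $t_2+1=t_L+1>t_J$, so $\Omega$ coincides with the bottom of $\rho_1$, which is free of $dt$ and integrates to zero over $S^1$. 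This yields (i) $\Leftrightarrow$ (ii).

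The only point requiring care is identifying $\Omega$ correctly under the splitting $\Omega^\bullet(M\times S^1)=\Omega^\bullet(M)\oplus dt\wedge\Omega^\bullet(M)$: no cancellations can occur between the two summands since they are linearly independent degree by degree, so the case analysis above is exhaustive. Everything else is orchestration of Proposition \ref{types} and Theorem \ref{spinortheorem}; I do not expect a substantial obstacle.
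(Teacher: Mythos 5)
Your argument is correct, and it reaches the conclusion by a genuinely different (and more elementary) route than the paper. The paper's own proof stays inside the T-duality machinery: it T-dualizes along the auxiliary circle with pairing $F=-dt\,d\tilde t$, uses the preceding proposition to identify $\tau_F(\rho)$ with $\rho_2+\sqrt{-1}dt\rho_1$ (so that $t_J={\rm type}(\rho_1)$ and $\hat t_J={\rm type}(\rho_2)$), and then reads the dichotomy on $\int_{S^1}\Omega$ off the type-change formula of Theorem \ref{Tduality}(iv); combining with Proposition \ref{types} gives the result. You instead bypass T-duality entirely: from the decomposition $\rho=\rho_1+\sqrt{-1}\,dt\wedge\rho_2$ and the fact that the summands cannot cancel degree by degree, you get $t_J=\min(t_1,t_2+1)$ directly, and the constraints $|t_1-t_2|=1$ and $t_1+t_2=2t_L-1$ (both extracted from Proposition \ref{types} and its proof) pin down the two cases; the equivalence with (ii) then follows by inspecting whether the bottom component of $\rho$ acquires a $dt$-factor. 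Both proofs rest on the same two inputs (Proposition \ref{types} and the explicit form of $\rho$), but yours trades the global T-duality theorem for a local degree count, which makes the proposition self-contained and independent of Section 8; the paper's version has the advantage of exhibiting the statement as an instance of the general type-change formula under T-duality, which is thematically the point of that section. One small remark: your step ``the $dt$-component is nonzero, hence $\int_{S^1}\Omega\neq 0$'' implicitly uses that $\rho$ (hence $\Omega$) is $S^1$-invariant, so that a nonzero $dt$-component cannot integrate to zero along the fiber; this is built into the setup (the spinor on $M\times S^1$ comes from exponentiating an $\R^+$-invariant structure on $C(M)$) and the paper makes the same implicit assumption, but it is worth stating.
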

\begin{proof} Let $F=-dt d\tilde t$ be the standard duality pairing on $M\times S^1\times S^1$ and let $\hat t_J = {\rm type}(\tau_F(\rho))$. According to Theorem \ref{Tduality}
\[
\hat t_J -t_J=
\left\{
\begin{array}{rll}\label{that}
-1 & {\rm if} &  \int_{S^1} \Omega \neq 0 \\
& &\\
1 & {\rm if} & \int_{S^1} \Omega = 0\,.
\end{array}
\right.
\]
On the other hand, $\rho=\rho_1 + \sqrt{-1} dt\cdot \e_2\cdot \rho_1$ implies that $t_J={\rm type}(\rho_1)$ and $\hat t_J= {\rm type}(\rho_2)$. Combining (\ref{that}) and  Proposition \ref{types},
\[
t_L-t_J =
\left\{
\begin{array}{rll}
0& {\rm if} &  \int_{S^1} \Omega \neq 0 \\
& &\\
1 & {\rm if} & \int_{S^1} \Omega = 0\,.
\end{array}
\right.
\]
\end{proof}

\begin{examples}\ \\
\begin{itemize}
\item[(a)](Three dimensional Heisenberg group)
Let $H_3$ be the real three-dimensional Heisenberg group with
Lie algebra
\[
\mathfrak{h}_3 = {\rm Lie}(H_3)= {\rm span}(X_1,X_2,X_3)
\]
has $[X_1,X_2]=-X_3$ as the only nontrivial bracket. If $\mathfrak{h}_3^*={\rm span}(\alpha^1,\alpha^2,\alpha^3)$ and $d\alpha^3 = \alpha^1\alpha^2$. It follows that the remaining nontrivial Dorfman brackets are $[X_1,\alpha^3] = \alpha^2$ and $[X^2,\alpha^3]=-\alpha^1$. Let $M$ be the left quotient of $H_3$ by the discrete subgroup ${\rm exp}(\mathbb Z X_3)$. Then $M$ is T-dual to $(\tilde M,\tilde H)$ where $\tilde M=S^1\times \mathbb R^2$ and $\tilde H=\alpha^1\alpha^2\tilde\alpha^3$, written in terms of a basis  $\{\alpha^1,\alpha^2,\tilde \alpha^3\}$ of global sections for $T^*\tilde M$ dual to $\{X_1,X_2,\tilde X_3\}$. The corresponding nontrivial brackets on $\T\tilde M$ are $[X_1,X_2]_{\tilde H} = \tilde \alpha^3$, $[X_1,\tilde X_3]_{\tilde H} = -\alpha^2$ and $[X_2,\tilde X_3]_{\tilde H} = \alpha^1$.  Therefore, $\varphi_F:\T M\to \T \tilde M$ is defined on generators as $\varphi_F(\alpha^3)= -\tilde X_3$, $\varphi_F(X_3)= -\tilde \alpha^3$ and for $i=1,2$ $\varphi_F(X_i)=X_i$, $\varphi_F(\alpha^i)=\alpha^i$. Consider the family of cosymplectic structure on $M$ parametrized by $b,c\in \mathbb R$ with $E_{b,c} = {\rm span}(X_1-cX_2+ bX_3, \alpha^1)$ and
\[
L_{b,c}={\rm span}(X_2-\sqrt{-1}\alpha^3+\sqrt{-1}b\alpha^1,X_3+\sqrt{-1}\alpha^2+\sqrt{-1}c\alpha^1)\, .
\]
Then $\tilde E_{b,c}=\varphi_F(E_{b,c})={\rm span}(X_1-cX_2- b\tilde\alpha^3, \alpha^1)$ and
\[
\tilde L_{b,c}=\varphi_F(L_{b,c})={\rm span}(X_2+\sqrt{-1}\tilde X_3+\sqrt{-1}b\alpha^1,-\tilde\alpha^3+\sqrt{-1}\alpha^2+\sqrt{-1}c\alpha^1)\, .
\]
Notice that the geometric type of $(E_{b,c},L_{b,c})$ is $(1,1)$ while the geometric type of $(\tilde E_{b,c},\tilde L_{b,c})$ is $(1,2)$. Moreover for $b\neq 0$, $(\tilde E_{b,c},\tilde L_{b,c})$ is not represented by a Poon-Wade triple.

\item[(b)](T-dual of $S^3$)
Consider the generalized almost complex structure defined in Example \ref{Sphere} and assume that $V_1(f)=V_1(g)=0$. Since the $S^1$-action generated by $V_1$ has no fixed points on $S^3$ the quotient map $\pi:S^3\to S^2=S^3/S^1$ is a (Hopf) fibration.  With respect to the projection $\tilde \pi: S^1\times S^2\to S^2$, $(S^3,0)$ and $(S^1\times S^2, \tilde \nu_1\nu_2\nu_3)$ define a T-dual pair with 1-dimensional fibers and pairing $F=-\nu_1\tilde \nu_1$. From Example \ref{spinorsphere}
\[
\tau_F(\rho_1) = -\tilde \nu_1 (\sqrt{-1} \nu_2 + \nu_3)\,,\quad \tau_F(\rho_2)=-(g+\sqrt{-1} f) \tilde \nu_1 + \sqrt{-1}\nu_2 + \nu_3\,,
\]
which implies $t_{\varphi_F(L)}=2$. On the other hand,
\[
\varphi_F(\e_1) = -\tilde \nu_1\,,\quad \varphi_F(\e_2)=-\tilde V_1-fV_2-gV_3\,,
\]
where $\tilde V_1=\varphi_F(\nu_1)$, which implies $p_{\varphi_F(E)}=1$. Notice that while $(p_E,t_L)$ may jump, $(p_{\varphi_F(E)},t_{\varphi_F(L)})$ is constant. Moreover,
\[
t_{\varphi_F(L)}-t_L = |p_{\varphi_F(E)}-p_E|=\left\{\begin{array}{cl}0&\mbox{ if } f(x)=0=g(x),\\\\
                                    1&\mbox{ otherwise,}\end{array}\right.
\]
which shows (upon considering the inverse T-duality as well) that all the cases listed in Theorem \ref{Tdualcontact} for $k=1$ can occur.\end{itemize}\end{examples}
\section{Acknowledgements}{We are indebted to Alessandro Tomasiello for valuable comments on the first draft. M.A.\ would like to thank the organizers of the ``Geometry of Strings and Fields" program at the Galileo Galilei Institute for Theoretical Physics, where parts of this paper were written, for providing excellent working conditions. D.G. is grateful to the Department of Mathematics and Statistics of the University of New Mexico and to the Efroymson Family Foundation for their support.}

\bibliography{GcontactBib}{}
\bibliographystyle{plain}

\end{document}